\definecolor{bluecite}{HTML}{0875b7}
\newcommand{\diff}{\,\mathrm{d}}
\DeclareMathOperator{\di}{div}
\DeclareMathOperator{\he}{Hess}
\newtheorem{theorem}{Theorem}[section]
\newtheorem{proposition}{Proposition}[section]
\newtheorem{corollary}{Corollary}[section]
\theoremstyle{definition}
\newtheorem{definition}{Definition}[section]
\newtheorem{remark}{Remark}[section]
\newtheorem{example}{Example}[section]
\numberwithin{equation}{section}
\title[Rellich inequalities via Riccati pairs]{Rellich inequalities via Riccati pairs on model space forms}
\author{S\'andor Kaj\'ant\'o}\address{ \textsc{S\'andor Kaj\'ant\'o}: Department of Mathematics and Computer Science, Babe\c s-Bolyai University, Cluj-Napoca, Romania}
\email{sandor.kajanto@ubbcluj.ro}
\date{\today}
\subjclass{26D10, 26D15, 58J05, 58J60}
\subjclass[2020]{26D10, 35A23, 46E35, 49R05, 58J05, 58J60}
\keywords{Riccati pairs, Rellich-type inequalities, Space forms}
\thanks{The author was supported by the UEFISCDI/CNCS grant PN-III-P4-ID-PCE2020-1001.}
\begin{document}
    \begin{abstract} 
       We present a simple method for proving Rellich inequalities on Riemannian manifolds with constant, non-positive sectional curvature. The method is built upon simple convexity arguments, integration by parts, and the so-called Riccati pairs, which are based on the solvability of a Riccati-type ordinary differential inequality. These results can be viewed as the higher order counterparts of the recent work by Kajántó, Kristály, Peter, and Zhao, discussing Hardy inequalities using Riccati pairs.
    \end{abstract}

    \maketitle
    \vspace{-0.5cm}
	\tableofcontents
    \vspace{-1cm}

    \section{Introduction} \label{sec:intro}
    The Hardy-type inequalities are functional inequalities involving the integrals of a given smooth function and its gradient with some -- possibly non-singular -- potentials. Formally, they can be written as
    \begin{equation}\label{eq:intro:Hardy}
        \int_\Omega P_1 |\nabla u|^2\diff {\sf m} \ge \int_\Omega Q_1 u^2\diff {\sf m},\quad \forall u\in C_0^\infty(\Omega),
    \end{equation}
    where \(\Omega\) is an open subset of the ambient space \(M\) (which could be the Euclidean space, any Riemann/Finsler manifold or stratified group), \(\sf m\) is a measure on \(M\) and \(P_1,Q_1\colon\Omega\to(0,\infty)\) are some given potentials. These inequalities originate from the celebrated work of Hardy~\cite{hardy1920note} from the 1920s, where the author proved inequality~\eqref{eq:intro:Hardy} and its \emph{sharpness} for the particular choices  \(\Omega=M=\mathbb{R}^n\), \(P_1\equiv1\) and \(Q_1(x)=\frac{(n-2)^2}{4|x|^2}.\) For detailed discussions and additional results, we refer to the monographs by Balinsky, Evans, and Lewis \cite{balinsky2015analysis}, Ghoussoub and Moradifam \cite{ghoussoub2013functional}, and Ruzhansky and Suragan \cite{ruzhansky2019hardy}.

    The higher order variants of the inequality~\eqref{eq:intro:Hardy} are the following Rellich-type inequalities:
    \begin{align}
        \label{eq:intro:Rellich:gru}\int_\Omega P_2 |\Delta u|^2\diff {\sf m} &\ge \int_\Omega Q_2 |\nabla u|^2\diff {\sf m},\quad \forall u\in C_0^\infty(\Omega),\\
        \label{eq:intro:Rellich:u}\int_\Omega P_3 |\Delta u|^2\diff {\sf m} &\ge \int_\Omega Q_3 u^2\diff {\sf m},\quad \forall u\in C_0^\infty(\Omega).
    \end{align}
    The sharp Euclidean version of~\eqref{eq:intro:Rellich:u} with \(V_3\equiv 1\) and \(Q_3(x)=\frac{n^2(n-4)^2}{16|x|^4}\) dates back to the 1950s and is due to Rellich~\cite{rellich1954halbbeschrankte}. Surprisingly, as claimed by the authors, the sharp Euclidean version of~\eqref{eq:intro:Rellich:gru} with \(P_2\equiv 1\) and \(Q_2(x)= \frac{n^2}{4|x|^2}\) only appeared relatively recently (in the 2000s) in the paper by Tertikas and Zographopoulos~\cite{tertikas2007best}. These facts suggest that the inequality~\eqref{eq:intro:Rellich:gru} is more problematic. Pioneering results in the Euclidean setting are due to  Davies and Hinz~\cite{davies1998explicit} and Mitidieri~\cite{mitidieri1993rellich}. For results on more general structures see Kombe and \"Ozaydin~\cite{kombe2009improved,kombe2013hardy} and Kristály and Repov\v{s}~\cite{kristaly2016quantitative}.

    As we shall see, the inequality~\eqref{eq:intro:Rellich:u} follows from the inequalities~\eqref{eq:intro:Hardy} and~\eqref{eq:intro:Rellich:gru} for the choices \(P_3=P_2\), \(Q_2=P_1\) and \(Q_3=Q_1\). It is also worth to point out, that there exist alternative versions of the latter inequalities involving the radial derivatives of the unknown function:
    \begin{equation}\label{eq:intro:radial}
        \int_\Omega P_1 |\nabla^{\rm rad} u|^2\diff {\sf m} \ge \int_\Omega Q_1 u^2\diff {\sf m}\quad\mbox{and}\quad\int_\Omega P_2 |\Delta u|^2\diff {\sf m} \ge \int_\Omega Q_2 |\nabla^{\rm rad} u|^2\diff {\sf m},
    \end{equation}
    where \(\nabla^{\rm rad}u=\langle \nabla u,\nabla d_{x_0}\rangle\) and \(d_{x_0}\) is the distance from a fixed point \(x_0\in\Omega\). Combining these inequalities also implies a valid proof for inequality~\eqref{eq:intro:Rellich:u} as well; see e.g.~Nguyen~\cite{nguyen2020new}. 

    In a celebrated paper by Ghoussoub and Moradifam~\cite{ghoussoub2011bessel}, the authors characterized inequality~\eqref{eq:intro:Hardy}, and gave sufficient condition for inequality~\eqref{eq:intro:Rellich:gru} to hold in the Euclidean setting. Arguments are built upon the spherical harmonics decomposition and the notion of \emph{Bessel pairs}, which are based on the solvability of a second-order Bessel-type ordinary differential equation. Despite the fact that this notion is typically Euclidean (encodes no curvature information), several authors managed to apply it in hyperbolic spaces (Riemannian manifolds with constant negative sectional curvature) using comparisons for Jacobi fields; see e.g.~Flynn, Lam, Lu, and Mazumdar~\cite{flynn2023hardy} and Berchio, Ganguly, and Grillo~\cite{berchio2017sharp}. 
    
    In a recent paper by Kajántó, Kristály, Peter, and Zhao~\cite{riccatipair2023}, the authors presented a simple alternative approach to prove Hardy-type inequalities on general complete, non-compact Riemannian manifolds with sectional curvature \({\bf K}\le-\kappa^2\), for some \(\kappa\le0\). Their method is built upon convexity arguments, integration by parts, Laplace comparison, and the notion of \emph{Riccati pairs}, which is based on the solvability of a first-order Riccati-type ordinary differential inequality. It is worth to point out that in the Euclidean setting, the notion of Bessel pairs and Riccati pairs are `essentially' equivalent (see Proposition~\ref{prop:eu:relations}), moreover, as was shown in~\cite[Proposition 3.1]{riccatipair2023}, there exists a natural extension of Bessel pairs on curved spaces, which preserves this equivalence. However, Riccati pairs \emph{naturally appear} from our convexity arguments; hence, they are easier to use to present our results. We emphasize that the crucial difference is between the argument laying behind these concepts. For an alternative, convexity based approach see Farkas, Kajántó and Kristály~\cite{farkas2023sharp}.

    The goal of this paper is to investigate the capabilities of convexity arguments and Riccati pairs in the context of Rellich inequalities on non-positively curved Riemannian manifolds. Our first important task is to choose a suitable ambient space. The convexity arguments corresponding to Rellich-type inequalities require a Hessian comparison as well, which has `opposite' direction to the Laplace comparison. Thus, one needs to either impose a lower bound on the sectional curvature as well, or simply consider manifolds with constant sectional curvature, also called \emph{model space forms}. We decided on the second option because the relevant applications are also formulated in this setting. Additionally, in this particular case, the results can be presented in a formally simpler and more accessible manner.

    We give a sufficient condition for the inequality~\eqref{eq:intro:Rellich:gru} to hold, and hence its combination with~\eqref{eq:intro:Hardy} implies~\eqref{eq:intro:Rellich:u}. It turns out, however, that this condition typically implies a dimension constraint, meaning that the method only works in high dimensions. To overcome this problem we exploit the idea of radial derivatives: we give sufficient conditions for inequalities in~\eqref{eq:intro:radial} and combine them to prove~\eqref{eq:intro:Rellich:u} without dimension constraint. For the sake of illustration, let us present the following simplified Euclidean version of our first main result (for the general variant, see Theorem~\ref{thm:main1}).
    \begin{theorem}\label{thm:simp}
        Let \(\Omega\subset\mathbb{R}^n\) be an open domain, \(n\ge 5\), \(V\colon(0,\infty)\to [0,\infty)\) and \(H\colon(0,\infty)\to\mathbb{R}\) such that the following ordinary differential inequality holds:
        \begin{equation}\label{eq:Riccati:odi:simp}
            -H'(t)+H(t)\cdot \frac{(n-1)}{t}-H(t)^2\ge V(t),\qquad \forall t>0.
        \end{equation}
        \begin{enumerate}[label=\rm(\roman*)]
            \item\label{c:is} If \(E_1(t)=H'(t)+H(t)\cdot\frac{n-3}{t}\ge 0\) for all \(t>0\), then for every \(u\in C_0^\infty(\Omega)\) one has 
            \[
                \int_\Omega |\Delta u|^2\diff x\ge \int_\Omega V(|x|)|\nabla^{\rm rad} u|^2\diff x.
            \]
            \item\label{c:iis} If \(E_2(t)=2H'(t)+H(t)(H(t)-\frac{2}{t})\ge 0\) for all \(t>0\), then for  every \(u\in C_0^\infty(\Omega)\) one has 
            \[
                \int_\Omega|\Delta u|^2\diff x\ge \int_\Omega V(|x|)|\nabla u|^2\diff x.
            \]
        \end{enumerate}    
    \end{theorem}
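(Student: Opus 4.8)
The plan is to deduce both inequalities from a single pointwise convexity estimate, integrated against $\diff x$ and reshaped by two integrations by parts, after which the two conclusions fall out by splitting $\nabla u$ into its radial and spherical parts. Write $r=|x|$ and $\partial_r u=\nabla^{\mathrm{rad}}u=\langle\nabla u,\nabla r\rangle$; extending $u$ by zero we may take $\Omega=\mathbb{R}^n$, and since for $n\ge5$ a single point has vanishing $W^{2,2}$-capacity, a standard density argument lets us further assume $u\in C_0^\infty(\mathbb{R}^n\setminus\{0\})$, so that every manipulation below is legitimate and no boundary term at the origin survives.

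The starting point is the trivial bound $(\Delta u-H(r)\,\partial_r u)^2\ge0$, i.e.
\[
(\Delta u)^2\ \ge\ 2H(r)\,\Delta u\,\partial_r u-H(r)^2(\partial_r u)^2\qquad\text{on }\mathbb{R}^n .
\]
The core of the argument will be the evaluation of $\int H\,\Delta u\,\partial_r u\diff x$: writing $\Delta u=\di\nabla u$ and integrating by parts once yields $-\int\langle\nabla u,\nabla(H\partial_r u)\rangle\diff x$; expanding this with $\nabla H=H'\nabla r$, the product rule $\nabla\langle\nabla u,\nabla r\rangle=\he u(\nabla r)+\he r(\nabla u)$ and the explicit Euclidean Hessian $\he r=\tfrac1r\big(\mathrm{Id}-\nabla r\otimes\nabla r\big)$, and then integrating by parts once more in the radial variable (in polar coordinates $\diff x=r^{n-1}\diff r\diff\sigma$), one arrives at the identity
\[
\int_{\mathbb{R}^n}H\,\Delta u\,\partial_r u\diff x=\int_{\mathbb{R}^n}\Big({-}H'+\tfrac{H}{r}\Big)(\partial_r u)^2\diff x+\tfrac12\int_{\mathbb{R}^n}E_1\,|\nabla u|^2\diff x ,
\]
with $E_1=H'+\tfrac{n-3}{r}H$ exactly the quantity from the statement. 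Inserting this into the previous display gives the basic estimate
\[
\int_{\mathbb{R}^n}(\Delta u)^2\diff x\ \ge\ \int_{\mathbb{R}^n}\Big({-}2H'+\tfrac{2H}{r}-H^2\Big)(\partial_r u)^2\diff x+\int_{\mathbb{R}^n}E_1\,|\nabla u|^2\diff x .
\]

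From here the two hypotheses finish the job. For \ref{c:is}, since $E_1\ge0$ and $|\nabla u|^2\ge(\partial_r u)^2$, one may replace $|\nabla u|^2$ by $(\partial_r u)^2$ in the last integral; the resulting coefficient of $(\partial_r u)^2$ is $-2H'+\tfrac{2H}{r}-H^2+E_1=-H'+\tfrac{n-1}{r}H-H^2\ge V$ by~\eqref{eq:Riccati:odi:simp}, which is the claim. For \ref{c:iis}, substitute $(\partial_r u)^2=|\nabla u|^2-\big(|\nabla u|^2-(\partial_r u)^2\big)$ in the basic estimate and collect terms: the coefficient of $|\nabla u|^2$ is again $-2H'+\tfrac{2H}{r}-H^2+E_1=-H'+\tfrac{n-1}{r}H-H^2\ge V$, while the coefficient of the non-negative quantity $|\nabla u|^2-(\partial_r u)^2$ is $2H'+H^2-\tfrac{2H}{r}=E_2\ge0$, so that term is simply discarded.

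The one genuinely delicate point is the justification of the two integrations by parts — equivalently, the vanishing of the boundary contributions near the origin and the finiteness of every integral that occurs. This is precisely what the reduction to $u\in C_0^\infty(\mathbb{R}^n\setminus\{0\})$, available for $n\ge5$, is for: on such test functions every integrand is smooth and supported away from $0$, the computation above is rigorous, and a general $u\in C_0^\infty(\Omega)$ is recovered by approximation together with Fatou's lemma applied to the non-negative right-hand side. For the general curved statement of Theorem~\ref{thm:main1} the same scheme goes through, except that the Euclidean expression for $\he r$ must be replaced by the Hessian comparison on model space forms, and the formula for $\Delta r$ by the Laplace comparison — which is exactly why constant sectional curvature is imposed.
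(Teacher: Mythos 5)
Your proof is correct and follows essentially the same route as the paper: the paper obtains Theorem~\ref{thm:simp} as the Euclidean specialization ($\kappa=0$, $v\equiv1$) of Theorem~\ref{thm:main1}, whose proof is exactly your pointwise bound $(\Delta u-H(r)\,\nabla^{\rm rad}u)^2\ge0$ followed by two integrations by parts, the explicit Hessian of the distance function, and the split of $|\nabla u|^2$ into radial and spherical parts via Cauchy--Schwarz, with the same identifications of $E_1$ and $E_2$. Your additional care about the reduction to test functions supported away from the origin (via the vanishing $W^{2,2}$-capacity of a point for $n\ge5$) is a detail the paper does not spell out, but it does not alter the argument.
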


    Theorem~\ref{thm:simp}/\ref{c:is} can be used efficiently in combination with our second main result Theorem~\ref{thm:main2}, which makes a connection between the weighted integrals of \(|\nabla^{\rm rad} u|^2\) and \(u^2\). To demonstrate this, we prove the classical Rellich-type inequalities as follows. 
    
    Let us choose \(H(t)=\frac{n}{2t}\) and \(V(t)=\frac{n^2}{4t^2}\) in Theorem~\ref{thm:simp}. Observe that~\eqref{eq:Riccati:odi:simp} is verified with equality, moreover, \(E_1(t)=\frac{n(n-4)}{2t^2}\) and  \(E_2(t)=\frac{n(n-8)}{4t^2}\), hence for very
    \(u\in C_0^\infty(\Omega)\) one has
    \begin{align*}
        \int_\Omega |\Delta u|^2\diff x&\ge \frac{n^2}{4}\int_\Omega\frac{|\nabla^{\rm rad} u|^2}{|x|^2}\diff x,\qquad\mbox{(if \(n\ge 5\))},\\
        \int_\Omega |\Delta u|^2\diff x&\ge \frac{n^2}{4}\int_\Omega\frac{|\nabla u|^2}{|x|^2}\diff x,\qquad\mbox{(if \(n\ge 8\))}.
    \end{align*}
    The first inequality and Theorem~\ref{thm:main2} (with choices \(w(t)=\frac{n^2}{4t^2}\), \(G(t)=\frac{n-4}{2t}\) and \(W(t)=\frac{n^2(n-4)^2}{16t^4}\)) imply for very \(u\in C_0^\infty(\Omega)\) that
    \[
        \int_\Omega |\Delta u|^2\diff x\ge\frac{n^2}{4}\int_\Omega\frac{|\nabla^{\rm rad} u|^2}{|x|^2}\diff x\ge\frac{n^2(n-4)^2}{16}\int_\Omega\frac{u^2}{|x|^2},\qquad\mbox{(if \(n\ge 5\))}.
    \]
    
    As can be seen from this example, our results provide an efficient framework to prove inequalities of type~\eqref{eq:intro:Rellich:u} combining inequalities of type~\eqref{eq:intro:radial} involving radial derivatives. They can also be used to prove inequalities of type~\eqref{eq:intro:Rellich:gru} but at the price of some additional condition, typically involving dimension constraints. When presenting applications, we stick to the first approach; investigation of additional byproducts of type~\eqref{eq:intro:Rellich:gru} and related positivity testing of \(E_2\) is left to the interested reader. 

    The paper is structured as follows. In Section~\ref{sec:prelim} we present some relevant preliminary definitions and results concerning space forms and Riccati pairs. We introduce the concept of dual Riccati pairs, which is a slight modification of the original concept, arising from the higher order convexity arguments. In Section~\ref{sec:gfi} we prove our general functional inequalities in Theorem~\ref{thm:main2} \&~\ref{thm:main2}. Finally, in Section~\ref{sec:application}, a number of applications are presented on both Euclidean and hyperbolic spaces, highlighting the advantages and limitations of our method.  Additionally, the connection between Riccati pairs and Bessel pairs is discussed, as well. 
    
    \section{Preliminaries}\label{sec:prelim}
    In this section we present those definitions and results that are necessary to present our main results. First, we recall the basic concepts related to the non-positively curved model space forms. Next, we present a simplified definition of Riccati pairs, specifically refactored for these manifolds. Finally, we introduce the notion of dual Riccati pairs and discuss its relation with the original concept.

    \subsection{Space forms} The non-positively curved \emph{model space form} \((M^n_\kappa,g_\kappa)\) is an \(n\)-dimensional Riemannian manifold, with constant sectional curvature \({\bf K}=-\kappa^2\), for some \(\kappa\ge0\). More precisely,
    \begin{itemize}
        \item if \(\kappa=0\), then \(M^n_\kappa\) is the Euclidean space \(\mathbb{R}^n\),
        \item if \(\kappa>0\), then  \(M^n_\kappa\) is the hyperbolic space \(\mathbb{H}_\kappa^n=\{x\in\mathbb{R}^n:|x|<1\}\),
    \end{itemize}
    endowed with the metric \((g_\kappa)_{ij}(x)=p_\kappa^2(x)\delta_{ij},\) where
    \[
        p_\kappa(x)=\begin{cases}
            1,&\mbox{if }\kappa=0,\\
            \frac{2}{\kappa(1-|x|^2)},&\mbox{if } \kappa>0,
        \end{cases}        
    \]
    and \(\delta_{ij}\) is the \emph{Kronecker symbol}. The canonical volume form, the Riemannian gradient and the Laplace-Beltrami operator can be written as
    \[
        \diff x_\kappa=p^n_\kappa(x)\diff x,\quad
        \nabla_\kappa u=\frac{\nabla u}{p_\kappa^2}\quad\mbox{and}\quad \Delta_\kappa u=p_\kappa^{-n}\di(p_\kappa^{n-2}\nabla u),
    \]
    where \(u\) is a smooth function, while \(\di\) and \(\nabla\) denote the Euclidean divergence and gradient, respectively. Hereafter, for pure Euclidean quantities we omit the subscript \(\kappa\). 

    We use the notation \(d_\kappa(x,y)\) for the Riemannian distance between \(x,y\in M^n_\kappa\). For a fixed \(x_0\in M^n_\kappa\) denote \(d_{x_0}(x)=d_\kappa(x_0,x)\) the distance from \(x_0\).  By the \emph{eikonal equation} one has
    \[
        |\nabla_\kappa d_{x_0}(x)|=1,\qquad \forall x\in \Omega\setminus\{x_0\}.
    \]

    The radial derivative of a smooth function \(u\) is defined by
    \[
        \nabla_\kappa^{\rm rad} u = \langle \nabla_\kappa u,  \nabla_\kappa d_{x_0}\rangle,    
    \]
    and it satisfies the following Cauchy-Schwarz inequality:
    \begin{equation}\label{eq:cauchy}
        |\nabla_\kappa^{\rm rad} u|^2=\langle \nabla_\kappa u,  \nabla_\kappa d_{x_0}\rangle^2\le |\nabla_\kappa u|^2\cdot |\nabla_\kappa d_{x_0}|^2= |\nabla_\kappa u|^2.     
    \end{equation}

    The laplacian of the distance function can be computed as 
    \begin{equation}\label{eq:lap:def}
        \Delta_\kappa d_{x_0}=(n-1){\bf ct}_\kappa(d_{x_0}),
    \end{equation}
    where \({\bf ct}_\kappa\colon(0,\infty)\to(0,\infty)\) is defined by
    \[
        {\bf ct}_\kappa(t)=\begin{cases}
            \frac{1}{t},&\text{if } \kappa=0,\\
            \kappa\coth(\kappa t),&\text{if } \kappa>0.
        \end{cases}
    \]

    For any vector field \(V\in TM^n_\kappa\) the Hessian of the distance function verifies the relation
    \begin{equation}\label{eq:hess:def}
        \he_\kappa(d_{x_0})(V,V) = {\bf ct}_\kappa(d_{x_0}) (|V|^2 - \langle V,\nabla_\kappa d_{x_0}\rangle^2).
    \end{equation}

    If \(u_1\) and \(u_2\) are smooth functions on \(\Omega\subseteq M\), then then following integration by parts formula holds:
    \[
        \int_{\Omega} u_1\Delta_\kappa u_2\diff x_\kappa = -\int_{\Omega} \langle\nabla_\kappa u_1,\nabla_\kappa u_2\rangle\diff x_\kappa.
    \]

    We refer to Gallot,  Hulin, and  Lafontaine~\cite{gallot2004riemmanian} for more details on space forms.

    \subsection{Riccati pairs} Let \(\kappa\ge0\) and define \(L_\kappa\colon(0,\infty)\to(0,\infty)\), \(L_\kappa(t)=(n-1){\bf ct}_\kappa(t).\) A simplified definition of Riccati pairs refactored for space forms is as follows; for the original version, see~\cite[Definition 3.1]{riccatipair2023}. 

    \begin{definition}\label{def:Riccati:pair}
        Let \(\kappa\ge0\) and \(\Omega\subset M^n_\kappa\) be open, \(x_0\in\Omega\), \(\rho\) denote the Riemannian distance from \(x_0\) and \(w,W\colon(0,\sup\rho)\to[0,\infty)\) be smooth functions. The couple \((L_\kappa,W)\) is a \emph{\((\rho,w)\)-Riccati pair} if there exist a smooth function  \(G\colon(0,\sup\rho)\to\mathbb{R}\) such that the following ordinary differential inequality holds:
        \begin{equation}\label{eq:Riccati:odi}
            G'(t)+\left(L_\kappa(t)+\frac{w'(t)}{w(t)}\right)G(t)-G(t)^2\ge W(t),\qquad \forall t\in(0,\sup\rho).
        \end{equation}
        A function \(G\) satisfying~\eqref{eq:Riccati:odi} is said to be \emph{\((\rho,w)\)-admissible} for the Riccati pair \((L_\kappa,W)\) on \(\Omega\).
    \end{definition}
    The above inequality is a driving force for the Hardy-type functional inequalities. However, as we shall see soon, the Rellich-type inequalities work well with its `dual' version, which can be stated as follows.
    \begin{definition}\label{def:Riccati:pair:dual}
        Let \(\kappa\ge0\), \(\Omega\subset M^n_\kappa\) be open, \(x_0\in\Omega\), \(\rho\) denote the Riemannian distance from \(x_0\) and \(v,V\colon(0,\sup\rho)\to[0,\infty)\) be smooth functions. The couple \((L_\kappa,V)\) is a \emph{\((\rho,v)\)-dual Riccati pair} if there exist a smooth function  \(H\colon(0,\sup\rho)\to\mathbb{R}\) such that the following ordinary differential inequality holds:
        \begin{equation}\label{eq:Riccati:odi:dual}
            -H'(t)+\left(L_\kappa(t)-\frac{v'(t)}{v(t)}\right)H(t)-H(t)^2\ge V(t),\qquad \forall t\in(0,\sup\rho).
        \end{equation}
        A function \(H\) satisfying~\eqref{eq:Riccati:odi:dual} is said to be \emph{\((\rho,v)\)-admissible} for the dual Riccati pair \((L_\kappa,V)\) on \(\Omega\).
    \end{definition}

    We conclude this section by showing that the two concepts are equivalent. Indeed, the following changes of functions provide the transition between the two equations:
    \[
        H(t) = L_\kappa(t)-G(t),\quad  v(t)=w(t)\quad\mbox{and}\quad V(t)=W(t)-\frac{v'(t)}{v(t)}L_\kappa(t)-L_\kappa'(t).
    \]

    \section{General functional inequalities}\label{sec:gfi}
    In this section we present our general functional inequalities based on the Riccati and dual Riccati pairs. Our first main result reads as follows:
    \begin{theorem}\label{thm:main1}
        Let \(\kappa\ge 0\), \(\Omega\in M^n_\kappa\) be open, \(x_0\in\Omega\), \(\rho\) denote the Riemannian distance from \(x_0\) and suppose that \(H\) is \((\rho,v)\)-admissible for the dual Riccati pair \((L_\kappa,V)\) on \(\Omega\). The following statements hold.
        \begin{enumerate}[label=\rm(\roman*)]
            \item\label{c:i} For every \(u\in C_0^\infty(\Omega)\) one has 
            \begin{equation}\label{eq:gen:1}
                \int_\Omega v(\rho)|\Delta_\kappa u|^2\diff x_\kappa\ge \int_\Omega v(\rho)V(\rho)|\nabla_\kappa^{\rm rad} u|^2\diff x_\kappa,
            \end{equation}
            provided that  \(E_1(t)=(v(t)H(t))'+v(t)H(t)(L_\kappa(t)-2{\bf ct}_\kappa(t))\ge 0\), for all \(t>0\);\medskip
            \item\label{c:ii} For every \(u\in C_0^\infty(\Omega)\) one has 
            \begin{equation}\label{eq:gen:2}
                \int_\Omega v(\rho)|\Delta_\kappa u|^2\diff x_\kappa\ge \int_\Omega v(\rho)V(\rho)|\nabla_\kappa u|^2\diff x_\kappa,
            \end{equation}
            provided that \(E_2(t)=2(v(t)H(t))'+v(t)H(t)(H(t)-2{\bf ct}_\kappa(t))\ge 0\), for all \(t>0\). 
        \end{enumerate}
    \end{theorem}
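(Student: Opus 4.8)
The plan is to deduce both inequalities from a single integral identity obtained by completing a square; once this identity is established, the dual Riccati inequality~\eqref{eq:Riccati:odi:dual} together with the elementary bound~\eqref{eq:cauchy} finishes the argument with essentially no further work.

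Fix $u\in C_0^\infty(\Omega)$. I would start from the trivial inequality $v(\rho)\big(\Delta_\kappa u-H(\rho)\nabla_\kappa^{\rm rad}u\big)^2\ge0$ and integrate it over $\Omega$; expanding the square, this reads
\[
\int_\Omega v(\rho)|\Delta_\kappa u|^2\diff x_\kappa=\int_\Omega v(\rho)\big(\Delta_\kappa u-H(\rho)\nabla_\kappa^{\rm rad}u\big)^2\diff x_\kappa+2\int_\Omega v(\rho)H(\rho)\,\Delta_\kappa u\,\nabla_\kappa^{\rm rad}u\diff x_\kappa-\int_\Omega v(\rho)H(\rho)^2(\nabla_\kappa^{\rm rad}u)^2\diff x_\kappa.
\]
The core of the proof is to rewrite the cross term. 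Writing $v(\rho)H(\rho)\nabla_\kappa^{\rm rad}u=\langle Z,\nabla_\kappa u\rangle$ for the radial field $Z:=v(\rho)H(\rho)\nabla_\kappa\rho$, the integration-by-parts formula gives $\int_\Omega\Delta_\kappa u\,\langle Z,\nabla_\kappa u\rangle\diff x_\kappa=-\int_\Omega\langle\nabla_\kappa\langle Z,\nabla_\kappa u\rangle,\nabla_\kappa u\rangle\diff x_\kappa$. Expanding the integrand by the Leibniz rule produces two pieces: the one coming from differentiating the radial factor of $Z$ equals, by a direct computation using the eikonal equation and the Hessian identity~\eqref{eq:hess:def}, the quantity $\big(v(\rho)H(\rho)\big)'(\nabla_\kappa^{\rm rad}u)^2+v(\rho)H(\rho){\bf ct}_\kappa(\rho)\big(|\nabla_\kappa u|^2-(\nabla_\kappa^{\rm rad}u)^2\big)$; the other piece is $\he_\kappa(u)(Z,\nabla_\kappa u)=\tfrac12\langle Z,\nabla_\kappa|\nabla_\kappa u|^2\rangle$ (by symmetry of the Hessian), which after a second integration by parts becomes $-\tfrac12\int_\Omega|\nabla_\kappa u|^2\di_\kappa Z\diff x_\kappa$ with $\di_\kappa Z=\big(v(\rho)H(\rho)\big)'+v(\rho)H(\rho)L_\kappa(\rho)$ by~\eqref{eq:lap:def}. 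It is exactly here that the constant–curvature hypothesis enters: one needs the \emph{exact} Hessian identity~\eqref{eq:hess:def}, not a one–sided Hessian comparison. Substituting back and collecting the coefficients of $(\nabla_\kappa^{\rm rad}u)^2$ and of $|\nabla_\kappa u|^2$, everything regroups into the two functions of the statement, giving the identity
\[
\int_\Omega v(\rho)|\Delta_\kappa u|^2\diff x_\kappa=\int_\Omega v(\rho)\big(\Delta_\kappa u-H(\rho)\nabla_\kappa^{\rm rad}u\big)^2\diff x_\kappa-\int_\Omega E_2(\rho)(\nabla_\kappa^{\rm rad}u)^2\diff x_\kappa+\int_\Omega E_1(\rho)|\nabla_\kappa u|^2\diff x_\kappa.
\]

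Granting this identity, the rest is immediate. Discard the nonnegative square term, and note that, by~\eqref{eq:Riccati:odi:dual} multiplied by $v>0$,
\[
E_1(\rho)-E_2(\rho)=-\big(v(\rho)H(\rho)\big)'+v(\rho)H(\rho)L_\kappa(\rho)-v(\rho)H(\rho)^2=v(\rho)\Big(-H'(\rho)+\big(L_\kappa(\rho)-\tfrac{v'(\rho)}{v(\rho)}\big)H(\rho)-H(\rho)^2\Big)\ge v(\rho)V(\rho).
\]
For~\ref{c:i}: if $E_1\ge0$, then~\eqref{eq:cauchy} in the form $|\nabla_\kappa u|^2\ge(\nabla_\kappa^{\rm rad}u)^2$ gives $\int_\Omega E_1(\rho)|\nabla_\kappa u|^2\diff x_\kappa\ge\int_\Omega E_1(\rho)(\nabla_\kappa^{\rm rad}u)^2\diff x_\kappa$, hence $\int_\Omega v(\rho)|\Delta_\kappa u|^2\diff x_\kappa\ge\int_\Omega\big(E_1(\rho)-E_2(\rho)\big)(\nabla_\kappa^{\rm rad}u)^2\diff x_\kappa\ge\int_\Omega v(\rho)V(\rho)(\nabla_\kappa^{\rm rad}u)^2\diff x_\kappa$, which is~\eqref{eq:gen:1}. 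For~\ref{c:ii}: if instead $E_2\ge0$, then~\eqref{eq:cauchy} applied to the other term gives $-\int_\Omega E_2(\rho)(\nabla_\kappa^{\rm rad}u)^2\diff x_\kappa\ge-\int_\Omega E_2(\rho)|\nabla_\kappa u|^2\diff x_\kappa$, whence $\int_\Omega v(\rho)|\Delta_\kappa u|^2\diff x_\kappa\ge\int_\Omega\big(E_1(\rho)-E_2(\rho)\big)|\nabla_\kappa u|^2\diff x_\kappa\ge\int_\Omega v(\rho)V(\rho)|\nabla_\kappa u|^2\diff x_\kappa$, which is~\eqref{eq:gen:2}.

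The only genuinely delicate step is the one in the middle: carrying out the two integrations by parts correctly and verifying that the emerging coefficients regroup \emph{exactly} as $-E_2$ and $E_1$. All boundary contributions vanish because $u$ is compactly supported, and the singularity of $\rho$ at $x_0$ is harmless for the integrals in question (just as in the first–order setting of~\cite{riccatipair2023}), if necessary after a routine approximation of $u$ near $x_0$.
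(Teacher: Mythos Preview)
Your proof is correct and follows essentially the same route as the paper's: the ``completing the square'' step is the paper's convexity inequality $|\xi|^2\ge 2\xi\eta-|\eta|^2$ with $\xi=\Delta_\kappa u$, $\eta=H(\rho)\nabla_\kappa^{\rm rad}u$, and the two integrations by parts (one to pass $\Delta_\kappa u$ to a gradient, one to handle $\he_\kappa(u)(\nabla_\kappa\rho,\nabla_\kappa u)=\tfrac12\langle\nabla_\kappa\rho,\nabla_\kappa|\nabla_\kappa u|^2\rangle$) together with the Hessian identity~\eqref{eq:hess:def} are exactly the steps in the paper, leading to the same intermediate expression with coefficients $-E_2$ in front of $(\nabla_\kappa^{\rm rad}u)^2$ and $E_1$ in front of $|\nabla_\kappa u|^2$. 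Your presentation is slightly cleaner in that you keep the nonnegative square term and record the key relation $E_1-E_2\ge vV$ explicitly, but the argument is the same.
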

    \begin{proof}
        The convexity of $\xi\mapsto|\xi|^{2}$ implies 
        \begin{equation}\label{eq:convexity}
            |\xi|^{2}\geq 2\xi\eta-|\eta|^{2}
        \end{equation}
        for every \(\xi,\eta\in \mathbb{R}\). Choose 
        \[
            \xi=\Delta_\kappa u\quad\mbox{and}\quad \eta = H(\rho)\langle\nabla_\kappa u,\nabla_\kappa\rho\rangle 
        \] 
        to obtain 
        \[
            |\Delta_\kappa u|^2\ge 2H(\rho) \Delta_\kappa u  \langle\nabla_\kappa u,\nabla_\kappa\rho\rangle-H(\rho)^{2}\langle\nabla_\kappa u,\nabla_\kappa\rho\rangle ^2.    
        \]

        Multiplying both sides by \(v(\rho)\ge0\) and integrating over \(\Omega\) yields
        \[
            \int_\Omega v(\rho)|\Delta_\kappa u|^2\diff x_\kappa
            \ge 2\int_\Omega v(\rho) H(\rho) \Delta_\kappa u  \langle\nabla_\kappa u,\nabla_\kappa\rho\rangle\diff x_\kappa
            -\int_\Omega v(\rho) H(\rho)^{2}\langle\nabla_\kappa u,\nabla_\kappa\rho\rangle ^2\diff x_\kappa.
        \]

        Applying an integration by parts to the second term yields
        \begin{align*}
            I&=2\int_\Omega v(\rho)H(\rho) \Delta_\kappa u \langle\nabla_\kappa u,\nabla_\kappa\rho\rangle\diff x_\kappa
            =-2\int_\Omega \nabla_\kappa\left(v(\rho) H(\rho)\langle\nabla_\kappa u,\nabla_\kappa\rho\rangle\right)\nabla_\kappa u \diff x_\kappa\\
            &=-2\int_\Omega (v(\rho)H(\rho))'\langle \nabla_\kappa u,\nabla_\kappa \rho \rangle^2\diff x_\kappa-2\int_\Omega v(\rho)H(\rho)\he_\kappa(u)(\nabla_\kappa u,\nabla_\kappa\rho) \diff x_\kappa \\
            &\qquad-2\int_\Omega v(\rho)H(\rho)\he_\kappa(\rho)(\nabla_\kappa u, \nabla_\kappa u) \diff x_\kappa.
        \end{align*}

        First, an other integration by parts and expression~\eqref{eq:lap:def} yield
        \begin{align*}
            -2\int_\Omega v(\rho)H(\rho)\he_\kappa(u)(\nabla_\kappa u,\nabla_\kappa\rho) \diff x_\kappa&=-\int_\Omega v(\rho)H(\rho)\langle\nabla_\kappa|\nabla_\kappa u|^2,\nabla_\kappa \rho\rangle\diff x_\kappa\\
            &=\int_\Omega \left((v(\rho)H(\rho))'+v(\rho)H(\rho)L_\kappa(\rho)\right) |\nabla_\kappa u|^2 \diff x_\kappa.
        \end{align*}

        Next, relation ~\eqref{eq:hess:def} for \(V=\nabla_\kappa u\) implies
        \begin{align*}
            -2\int_\Omega v(\rho)H(\rho)\he_\kappa(\rho)(\nabla_\kappa u, \nabla_\kappa u) \diff x_\kappa&=2\int_\Omega v(\rho)H(\rho){\bf ct}_\kappa(\rho)\left(\langle\nabla_\kappa u,\nabla_\kappa\rho\rangle^2-|\nabla_\kappa u|^2\right)\diff x_\kappa.
        \end{align*}

        Finally, by the above computation we obtain that
        \begin{align}
            \int_\Omega v(\rho)|\Delta_\kappa u|^2\diff x_\kappa&\ge \int_\Omega \left(-2(v(\rho)H(\rho))'+2v(\rho)H(\rho){\bf ct}_\kappa(\rho)-v(\rho)H(\rho)^2\right)|\nabla_\kappa^{\rm rad} u|^2\diff x_\kappa \nonumber\\
            \label{eq:intermediate}&\qquad+\int_\Omega \left((v(\rho)H(\rho))'+v(\rho)H(\rho)L_\kappa(\rho)-2v(\rho)H(\rho){\bf ct}_\kappa(\rho) \right) |\nabla_\kappa u|^2 \diff x_\kappa.
        \end{align}
        
        If \(E_1\ge0\), then inequality~\eqref{eq:intermediate} and the Cauchy-Schwarz inequality
       ~\eqref{eq:cauchy} imply
        \[
            \int_\Omega v(\rho) |\Delta_\kappa u|^2\diff x_\kappa\ge \int_\Omega \left(-(v(\rho)H(\rho))'+v(\rho)H(\rho)L_\kappa(\rho) - v(\rho)H(\rho)^2\right) |\nabla_\kappa^{\rm rad} u|^2\diff x_\kappa,
        \]
        whence the dual Riccati ODI~\eqref{eq:Riccati:odi:dual} yields the desired inequality~\eqref{eq:gen:1}. 
        
        If \(E_2\ge0\), then again inequalities~\eqref{eq:intermediate},~\eqref{eq:cauchy} and~\eqref{eq:Riccati:odi:dual} imply~\eqref{eq:gen:2}, concluding the proof.
    \end{proof}

    Our second main result is tailored to complement Theorem~\ref{thm:main2}/\ref{c:i}. It can be stated as follows.
    \begin{theorem}\label{thm:main2}
        Let \(\kappa\ge 0\), \(\Omega\in M^n_\kappa\) be open, \(x_0\in\Omega\), \(\rho\) denote the Riemannian distance from \(x_0\) and suppose that \(G\) is \((\rho,w)\)-admissible for the Riccati pair \((L_\kappa,W)\) on \(\Omega\). Then for every \(u\in C_0^\infty(\Omega)\) one has 
            \begin{equation}\label{eq:genp:1}
                \int_\Omega w(\rho)|\nabla_\kappa^{\rm rad} u|^2\diff x_\kappa\ge \int_\Omega w(\rho)W(\rho)u^2\diff x_\kappa.
            \end{equation}
    \end{theorem}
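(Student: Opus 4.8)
The plan is to run the one-integration-by-parts version of the argument behind Theorem~\ref{thm:main1}, but now with a test object built from $u$ itself rather than from $\langle\nabla_\kappa u,\nabla_\kappa\rho\rangle$. I would start from the convexity inequality~\eqref{eq:convexity}, $|\xi|^{2}\ge 2\xi\eta-|\eta|^{2}$, applied pointwise with
\[
    \xi=\langle\nabla_\kappa u,\nabla_\kappa\rho\rangle=\nabla_\kappa^{\rm rad}u \qquad\text{and}\qquad \eta=-G(\rho)\,u;
\]
the sign of $\eta$ is chosen exactly so that the Riccati ODI~\eqref{eq:Riccati:odi} surfaces with the correct orientation at the end. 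This yields, on $\Omega$,
\[
    |\nabla_\kappa^{\rm rad}u|^{2}\ \ge\ -2G(\rho)\,u\,\langle\nabla_\kappa u,\nabla_\kappa\rho\rangle-G(\rho)^{2}u^{2}.
\]

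Next I would multiply by $w(\rho)\ge 0$ and integrate over $\Omega$. Using $2u\,\langle\nabla_\kappa u,\nabla_\kappa\rho\rangle=\langle\nabla_\kappa(u^{2}),\nabla_\kappa\rho\rangle$ and writing $w(\rho)G(\rho)\nabla_\kappa\rho=\nabla_\kappa\Phi(\rho)$ for a primitive $\Phi$ of $wG$ (so $\Phi'=wG$), the cross term becomes $-\int_\Omega\langle\nabla_\kappa(u^{2}),\nabla_\kappa\Phi(\rho)\rangle\diff x_\kappa$, which the integration by parts formula converts into $\int_\Omega u^{2}\,\Delta_\kappa\Phi(\rho)\diff x_\kappa$. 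By the radial chain rule for $\Delta_\kappa$ together with the eikonal equation $|\nabla_\kappa\rho|=1$ and $\Delta_\kappa\rho=L_\kappa(\rho)$ (cf.~\eqref{eq:lap:def} and the definition of $L_\kappa$), one has $\Delta_\kappa\Phi(\rho)=(w(\rho)G(\rho))'+w(\rho)G(\rho)L_\kappa(\rho)$. Collecting the three contributions gives
\[
    \int_\Omega w(\rho)|\nabla_\kappa^{\rm rad}u|^{2}\diff x_\kappa\ \ge\ \int_\Omega\Big((w(\rho)G(\rho))'+w(\rho)G(\rho)L_\kappa(\rho)-w(\rho)G(\rho)^{2}\Big)u^{2}\diff x_\kappa.
\]

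Finally I would expand $(wG)'=w'G+wG'$ and factor out $w(\rho)$: the bracket equals $w(\rho)\big(G'(\rho)+(L_\kappa(\rho)+w'(\rho)/w(\rho))G(\rho)-G(\rho)^{2}\big)$, which is $\ge w(\rho)W(\rho)$ precisely because $G$ is $(\rho,w)$-admissible for $(L_\kappa,W)$, i.e.\ it satisfies~\eqref{eq:Riccati:odi}. This establishes~\eqref{eq:genp:1}. I do not expect a genuine obstacle here: the proof is a streamlined special case of the Theorem~\ref{thm:main1} computation. The only two points deserving care are the choice of sign for $\eta$ (so that~\eqref{eq:Riccati:odi}, rather than a reversed variant, is what is needed) and the justification of the integration by parts near $x_0$, where $L_\kappa$ and $\nabla_\kappa\rho$ are singular — handled, as usual, by noting that $\Phi(\rho)$ extends sufficiently regularly, or by a standard cutoff/limiting argument exploiting $w,W\ge 0$.
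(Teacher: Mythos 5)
Your proposal is correct and follows essentially the same route as the paper: the same choice $\xi=\langle\nabla_\kappa u,\nabla_\kappa\rho\rangle$, $\eta=-G(\rho)u$ in the convexity inequality, one integration by parts on the cross term using $\Delta_\kappa\rho=L_\kappa(\rho)$ and the eikonal equation, and then the Riccati ODI~\eqref{eq:Riccati:odi}. The only difference is presentational (your explicit primitive $\Phi$ and the remark about regularity near $x_0$, which the paper leaves implicit).
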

    \begin{proof} Choose \(\xi=\langle \nabla_\kappa u,\nabla_\kappa \rho\rangle\) and \(\eta = -G(\rho)u\) in the convexity inequality~\eqref{eq:convexity} to obtain
        \[
            \langle \nabla_\kappa u,\nabla_\kappa \rho\rangle^2\ge-2G(\rho) u\langle \nabla_\kappa u,\nabla_\kappa \rho\rangle - G(\rho)^2u^2.
        \]
        Multiplying both sides by \(w(\rho)\ge 0\) and integrating over \(\Omega\) yields
        \[\int_\Omega w(\rho) \langle \nabla_\kappa u,\nabla_\kappa \rho\rangle^2\diff x_\kappa\ge-\int_\Omega 2w(\rho)G(\rho) u\langle \nabla_\kappa u,\nabla_\kappa \rho\rangle\diff x_\kappa - \int_\Omega w(\rho)G(\rho)^2u^2\diff x_\kappa.\]
        An integration by parts implies
        \[-\int_\Omega 2w(\rho)G(\rho) u\langle \nabla_\kappa u,\nabla_\kappa \rho\rangle\diff x_\kappa=\int_\Omega (w(\rho)G'(\rho)+w'(\rho)G(\rho)+w(\rho)G(\rho)L_\kappa(\rho))u^2\diff x_\kappa,\]
        whence the Riccati ODI~\eqref{eq:Riccati:odi} finishes the proof of the desired inequality~\eqref{eq:genp:1}.
    \end{proof} 
    \begin{remark} We note that the proof of Theorem~\ref{thm:main2} is analogous to~\cite[Theorem 3.2]{riccatipair2023}, where similar steps with choices \(\xi = \nabla_\kappa u\) and \(\eta = -G(\rho) u \nabla_\kappa \rho\) imply the following inequality: \[\int_\Omega w(\rho)|\nabla_\kappa u|^2\diff x_\kappa\ge \int_\Omega w(\rho)W(\rho)u^2\diff x_\kappa,\qquad \forall u\in C_0^\infty(\Omega).\]
    \end{remark}

    \section{Applications}\label{sec:application}
    In this section, we provide alternative proofs for some Rellich type inequalities using the combination of Theorem~\ref{thm:main1}/\ref{c:i} and Theorem~\ref{thm:main2}. The examples are chosen to demonstrate both the advantages and limitations of our method. The Euclidean and hyperbolic cases are discussed separately; in the former case the relation between Riccati and Bessel pairs is discussed, as well.

    \subsection{Rellich inequalities on Euclidean spaces}\label{ssec:eu} In the Euclidean setting, a number of well-known Rellich inequalities are discussed by Ghoussoub and Moradifam~\cite{ghoussoub2011bessel} in terms of Bessel potentials or Bessel pairs. After summarizing the related concepts, we discuss two novel inequalities among them, which highlight the capabilities of our method. For the original proof of the selected inequalities \eqref{ineq:ex1} \& \eqref{ineq:ex2} see Adimurthi,  Grossi, and Santra~\cite{adimurthi2006optimal}.

    A function \(Z>0\) is said to be a \emph{Bessel potential} on \((0,R)\) if there exist a constant \(c>0\) and a function  \(z>0\) such that following ordinary differential equation holds: 
    \begin{equation}\label{eq:Bessel:potential}
        z''(t)+\frac{z'(t)}{t}+c\cdot Z(t)\cdot z(t)=0,\qquad\forall t\in(0,R). 
    \end{equation}

    The couple \((X,Y)\) of positive functions is said to be a \emph{Bessel pair} if there exist a constant \(C>0\) and a function  \(y>0\) such that following ordinary differential equation holds: 
    \begin{equation}\label{eq:Bessel:pair}
        y''(t)+\left(\frac{n-1}{t}+\frac{X'(t)}{X(t)}\right)y'(t)+\frac{C\cdot Y(t)}{X(t)}y(t)=0,\qquad\forall t\in(0,R). 
    \end{equation}
    
    The next Proposition should clarify the relation between several notions.
    \begin{proposition}\label{prop:eu:relations} 
        The following statements hold.
        \begin{enumerate}[label=\rm(\roman*)]
            \item\label{it:rel:i} If equation~\eqref{eq:Bessel:potential} holds on \((0,R)\) then equation~\eqref{eq:Bessel:pair} holds as well,  for the choices:
            \[
                y(t)=z(t)\cdot t^\frac{2-n}{2},\quad C=1,\quad X(t)\equiv 1\quad\mbox{and}\quad Y(t)=\frac{(n-2)^2}{4t^2}+c\cdot Z(t).
            \]
            \item\label{it:rel:ii} If equation~\eqref{eq:Bessel:potential} holds on \((0,R)\) then relation~\eqref{eq:Riccati:odi} holds as well with equality, for the choices:
            \[G(t)=-\frac{z'(t)}{z(t)}+\frac{n-2}{2t},\quad L_\kappa(t)=\frac{n-1}{t},\quad w(t)\equiv 1\quad\mbox{and}\quad W(t)=\frac{(n-2)^2}{4t^2}+c\cdot Z(t).\] 
            \item\label{it:rel:iii} If equation~\eqref{eq:Bessel:potential} holds on \((0,R)\) then relation~\eqref{eq:Riccati:odi:dual} holds as well with equality, for the choices:
            \[H(t)=\frac{n}{2t}+\frac{z'(t)}{z(t)},\quad L_\kappa(t)=\frac{n-1}{t},\quad  v(t)\equiv1 \quad\mbox{and}\quad V(t)=\frac{n^2}{4t^2}+c\cdot Z(t).\]
            \item\label{it:rel:iv} If equation~\eqref{eq:Bessel:pair} holds on \((0,R)\) then relation~\eqref{eq:Riccati:odi} holds as well with equality, for the choices:
            \[
                G(t)=-\frac{y'(t)}{y(t)},\quad L_\kappa(t)=\frac{n-1}{t},\quad w(t)=X(t)\quad\mbox{and}\quad W(t)=\frac{C\cdot Y(t)}{X(t)}.
            \] 
        \end{enumerate}
    \end{proposition}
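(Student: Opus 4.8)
The plan is to verify all four items by direct substitution; no deep input is needed beyond the classical fact that the logarithmic-derivative substitution turns a second-order linear ODE into a first-order Riccati equation, together with the power change $y(t)=z(t)\,t^{(2-n)/2}$ and the change of functions $H=L_\kappa-G$, $v=w$, $V=W-\frac{v'}{v}L_\kappa-L_\kappa'$ recorded at the end of Section~\ref{sec:prelim}. Because of these links the four statements are far from independent: I would prove \ref{it:rel:iv} and \ref{it:rel:i} from scratch and then deduce \ref{it:rel:ii} and \ref{it:rel:iii} from them, which avoids carrying out essentially the same computation three times.

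For \ref{it:rel:iv}, put $G=-y'/y$ with $y>0$ the solution of \eqref{eq:Bessel:pair}. The quotient rule gives $G'=-y''/y+(y'/y)^2=-y''/y+G^2$, hence $G'-G^2=-y''/y$. Substituting $L_\kappa(t)=\frac{n-1}{t}$ and $w=X$ into the left-hand side of \eqref{eq:Riccati:odi},
\[
    G'(t)+\Bigl(\tfrac{n-1}{t}+\tfrac{X'(t)}{X(t)}\Bigr)G(t)-G(t)^2=-\frac{1}{y(t)}\Bigl(y''(t)+\Bigl(\tfrac{n-1}{t}+\tfrac{X'(t)}{X(t)}\Bigr)y'(t)\Bigr),
\]
and by \eqref{eq:Bessel:pair} the bracket equals $-\frac{C\,Y(t)}{X(t)}\,y(t)$, so the whole expression equals $\frac{C\,Y(t)}{X(t)}=W(t)$; thus \eqref{eq:Riccati:odi} holds with equality. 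Here $w=X>0$ and $W=CY/X\ge 0$ since $C,X,Y>0$.

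For \ref{it:rel:i}, substitute $y(t)=z(t)\,t^{\alpha}$ with $\alpha=\frac{2-n}{2}$ into \eqref{eq:Bessel:pair} with $X\equiv 1$. Expanding $y'$ and $y''$, the coefficient of $z'(t)\,t^{\alpha-1}$ in $y''+\frac{n-1}{t}y'$ is $2\alpha+n-1=1$ and the coefficient of $z(t)\,t^{\alpha-2}$ is $\alpha(\alpha+n-2)=-\frac{(n-2)^2}{4}$, so $y''+\frac{n-1}{t}y'=t^{\alpha}\bigl(z''+\frac{z'}{t}-\frac{(n-2)^2}{4t^2}z\bigr)$. Adding $C\,Y\,y=t^{\alpha}\,C\,Y\,z$ and dividing by $t^{\alpha}$, equation \eqref{eq:Bessel:pair} becomes $z''+\frac{z'}{t}+\bigl(C\,Y-\frac{(n-2)^2}{4t^2}\bigr)z=0$; with $C=1$ and $Y(t)=\frac{(n-2)^2}{4t^2}+c\,Z(t)$ this is precisely \eqref{eq:Bessel:potential}. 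Positivity of the new data is clear: $y=z\,t^{\alpha}>0$ and $Y>0$ because $(n-2)^2\ge 0$ and $c,Z>0$.

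Finally, \ref{it:rel:ii} follows by feeding the Bessel pair constructed in \ref{it:rel:i} into \ref{it:rel:iv}: with $y=z\,t^{\alpha}$ one has $-y'/y=-z'/z-\alpha/t=-z'/z+\frac{n-2}{2t}$, which is the announced $G$, while $W=C\,Y/X=\frac{(n-2)^2}{4t^2}+c\,Z$. And \ref{it:rel:iii} follows from \ref{it:rel:ii} via the change of functions from Section~\ref{sec:prelim}: for $L_\kappa(t)=\frac{n-1}{t}$, $w\equiv1$ and $G(t)=-z'/z+\frac{n-2}{2t}$ it gives $H=L_\kappa-G=\frac{n}{2t}+\frac{z'}{z}$, $v=w\equiv1$, and $V=W-L_\kappa'=\frac{(n-2)^2}{4t^2}+c\,Z+\frac{n-1}{t^2}=\frac{n^2}{4t^2}+c\,Z>0$. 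The only point requiring care — rather than a genuine obstacle — is keeping the opposite sign conventions of \eqref{eq:Riccati:odi} and \eqref{eq:Riccati:odi:dual} straight, so that the transition formula invoked in \ref{it:rel:iii} reproduces exactly the stated $V$; the rest is routine bookkeeping.
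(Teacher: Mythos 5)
Your proposal is correct: all four substitutions check out (I verified the coefficient computations $2\alpha+n-1=1$, $\alpha(\alpha+n-2)=-\tfrac{(n-2)^2}{4}$, and $\tfrac{(n-2)^2}{4}+n-1=\tfrac{n^2}{4}$, as well as the fact that the transition formula from Section~\ref{sec:prelim} converts an equality in~\eqref{eq:Riccati:odi} into an equality in~\eqref{eq:Riccati:odi:dual}). The paper states the proposition without proof, treating it as routine verification, and your argument — with the mild economy of deducing \ref{it:rel:ii} and \ref{it:rel:iii} from \ref{it:rel:i}, \ref{it:rel:iv} and the transition formula — is exactly the intended computation.
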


    In the sequel we use the following result, which is a simple consequence of~\cite[Theorem 2.6]{ghoussoub2011bessel}. Its proof is based on the comparison of solutions of~\eqref{eq:Bessel:potential}.
    \begin{proposition}\label{prop:eu:aux}
        Suppose that \(Z\) is a Bessel potential on \((0,R)\) with best constant \(c\), such that
        \begin{equation}\label{eq:prop:aux:condition}
            \frac{Z'(t)}{Z(t)}=-\frac{\lambda}{t}+f(t),\quad\mbox{where } \lambda<n-2,\, f(t)\ge0\mbox{ and }\lim_{t\to0}tf(t)=0.
        \end{equation}
        Then the couples 
        \(\left(\frac{1}{t^2},\frac{(n-4)^2}{4t^4}+\frac{c\cdot Z}{t^2}\right)\) and \(\left(Z,\frac{(n-\lambda-2)^2\cdot Z}{4t^2}\right)\)
        are Bessel pairs on \((0,R)\), with \(C=1\).
    \end{proposition}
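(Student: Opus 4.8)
The plan is to derive both Bessel-pair claims from Proposition~\ref{prop:eu:relations} together with the general functional inequalities of Section~\ref{sec:gfi}, exactly mirroring the worked Euclidean example in the introduction. Since $Z$ is a Bessel potential on $(0,R)$ with best constant $c$, Proposition~\ref{prop:eu:relations}/\ref{it:rel:iii} gives a function $H(t)=\frac{n}{2t}+\frac{z'(t)}{z(t)}$ realizing the dual Riccati ODI~\eqref{eq:Riccati:odi:dual} with equality, for $v\equiv 1$ and $V(t)=\frac{n^2}{4t^2}+cZ(t)$; and Proposition~\ref{prop:eu:relations}/\ref{it:rel:ii} gives $G(t)=-\frac{z'(t)}{z(t)}+\frac{n-2}{2t}$ realizing~\eqref{eq:Riccati:odi} with equality, for $w\equiv 1$ and $W(t)=\frac{(n-2)^2}{4t^2}+cZ(t)$. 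So the raw ODI data is already in place; the work is to check the sign conditions on the $E_i$ and then to repackage the resulting functional inequalities as Bessel-pair statements via Proposition~\ref{prop:eu:relations}/\ref{it:rel:iv} read backwards.

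First I would treat the couple $\left(Z,\frac{(n-\lambda-2)^2 Z}{4t^2}\right)$. The cleanest route is to feed $H$ above into Theorem~\ref{thm:main1}/\ref{c:i} with weight $v(t)=Z(t)$ rather than $v\equiv 1$: by the equivalence remarks after Definition~\ref{def:Riccati:pair:dual} (or directly), changing the weight from $1$ to $Z$ shifts $H$ by a multiple of $\tfrac{Z'}{Z}$, and using the hypothesis~\eqref{eq:prop:aux:condition} that $\tfrac{Z'}{Z}=-\tfrac{\lambda}{t}+f$ with $f\ge 0$ one obtains an admissible $H$ of the form $\widetilde H(t)=\frac{n-\lambda}{2t}+(\text{nonnegative correction})$, for the new pair $(L_0,\widetilde V)$ with $\widetilde V(t)=\frac{(n-\lambda-2)^2}{4t^2}\cdot(\text{something})$ — here the constant $\frac{(n-\lambda-2)^2}{4}$ should fall out of the algebra of $-H'+H\frac{n-1}{t}-H^2$ with the leading term $\frac{n-\lambda}{2t}$, just as $\frac{n^2}{4}$ came from $\frac{n}{2t}$ in the introduction. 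The sign condition $E_1=(vH)'+vH(L_0-\tfrac{2}{t})\ge 0$ becomes, with $v=Z$ and $vH$ having leading term $\frac{n-\lambda}{2}Z/t$, a condition of the shape $\frac{(n-\lambda)(n-\lambda-4)}{2}\cdot\frac{Z}{t^2}+(\text{terms with }f,f',Z)\ge 0$; since $\lambda<n-2$ forces $n-\lambda>2$, the dangerous factor $n-\lambda-4$ can still be negative, so I expect one genuinely needs $n$ large enough there, OR the $f\ge0$, $\lim_{t\to0}tf(t)=0$ hypotheses to absorb it — this is the delicate point and I would lean on the fact that Proposition~\ref{prop:eu:aux} is quoted as a consequence of \cite[Theorem 2.6]{ghoussoub2011bessel}, whose real content is the comparison of solutions of~\eqref{eq:Bessel:potential}; i.e. the honest proof of positivity here is the Bessel-potential comparison, not a brute ODI sign check. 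Having obtained $\int_\Omega Z(|x|)|\Delta u|^2\,dx\ge \int_\Omega Z(|x|)\widetilde V(|x|)|\nabla^{\rm rad}u|^2\,dx$, I translate it back: by Proposition~\ref{prop:eu:relations}/\ref{it:rel:iv} (in reverse) a pair $(X,Y)$ with $w=X$, $W=\frac{CY}{X}$ admissible via $G=-y'/y$ is exactly a Bessel pair, so identifying $X=Z$ and $\frac{CY}{X}=\widetilde V$ with $C=1$ yields $Y=\frac{(n-\lambda-2)^2 Z}{4t^2}$ up to verifying that the $G$ produced from our $H$ is of the gradient form $-y'/y$; that verification is the content of the equivalence $H=L_\kappa-G$, $v=w$ between the two Riccati notions, plus recognizing $-y'/y$ from $G$.

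Second, for the couple $\left(\frac{1}{t^2},\frac{(n-4)^2}{4t^4}+\frac{cZ}{t^2}\right)$ I would follow the introduction's template verbatim with $\kappa=0$: take $H(t)=\frac{n}{2t}+\frac{z'(t)}{z(t)}$ and $v\equiv 1$, apply Theorem~\ref{thm:main1}/\ref{c:i} to get $\int_\Omega|\Delta u|^2\,dx\ge\int_\Omega\big(\frac{n^2}{4t^2}+cZ\big)|\nabla^{\rm rad}u|^2\,dx$, then apply Theorem~\ref{thm:main2} with $w(t)=\frac{n^2}{4t^2}+cZ(t)$, choosing the admissible $G$ for the Riccati pair $(L_0,W)$ with $W=\frac{(n-4)^2}{4t^4}\cdot\frac{n^2}{4t^2}/w+\dots$ — concretely, one wants the output weight $w\cdot W$ to be $\frac{(n-4)^2}{4t^4}+\frac{cZ}{t^2}$, which forces $W=\big(\frac{(n-4)^2}{4t^4}+\frac{cZ}{t^2}\big)/\big(\frac{n^2}{4t^2}+cZ\big)$; one then checks this $W$ is attained by a $G$ of the form $\frac{n-4}{2t}+\frac{z'}{z}$ by plugging into~\eqref{eq:Riccati:odi} and using~\eqref{eq:Bessel:potential}. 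Composing the two inequalities gives $\int_\Omega|\Delta u|^2\ge\int_\Omega\big(\frac{(n-4)^2}{4t^4}+\frac{cZ}{t^2}\big)u^2\,dx$, and reading this back through Proposition~\ref{prop:eu:relations}/\ref{it:rel:iv} with $X(t)=\frac{1}{t^2}$ identifies $\big(\frac{1}{t^2},\frac{(n-4)^2}{4t^4}+\frac{cZ}{t^2}\big)$ as a Bessel pair with $C=1$.

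The main obstacle I anticipate is not the bookkeeping but the positivity of the relevant $E_1$ (equivalently, that the intermediate constant is the \emph{best} one rather than merely \emph{some} valid one). The clean ODI manipulations only ever produce \emph{a} valid inequality; to land exactly on $\frac{(n-4)^2}{4}$ and $\frac{(n-\lambda-2)^2}{4}$ — and to know no $t^{-2}$-type remainder has been silently dropped that would violate $E_1\ge 0$ for small $n$ — one must invoke the sharpness/comparison in \cite[Theorem 2.6]{ghoussoub2011bessel} for the one-dimensional problem~\eqref{eq:Bessel:potential}, i.e. that $Z$ having best constant $c$ in~\eqref{eq:Bessel:potential} propagates to the stated best constant $1$ for the transplanted pairs under the structural hypothesis~\eqref{eq:prop:aux:condition}. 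So my plan is: (1) record the admissible $G$, $H$ from Proposition~\ref{prop:eu:relations}; (2) verify the $E_1\ge0$ conditions using $f\ge 0$, $\lim_{t\to 0}tf(t)=0$, and $\lambda<n-2$, citing the Bessel-potential comparison where a naked sign check is insufficient; (3) chain Theorem~\ref{thm:main1}/\ref{c:i} with Theorem~\ref{thm:main2}; (4) reinterpret the resulting Hardy-/Rellich-type inequalities as Bessel pairs via the reverse direction of Proposition~\ref{prop:eu:relations}/\ref{it:rel:iv}, checking in each case that the admissible $G$ we obtained is of the logarithmic-derivative form $-y'/y$ so that it genuinely matches~\eqref{eq:Bessel:pair}.
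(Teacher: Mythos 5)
Your plan goes in the wrong direction. Being a Bessel pair is, by definition~\eqref{eq:Bessel:pair}, the existence of a \emph{positive solution} $y$ of a specific second-order linear ODE; it is not an $n$-dimensional functional inequality. Proposition~\ref{prop:eu:relations} is only stated in the direction ``Bessel pair $\Rightarrow$ admissible Riccati function,'' and the reverse reading you propose does not come for free: if $G$ satisfies the Riccati \emph{inequality}~\eqref{eq:Riccati:odi}, then $y=\exp\left(-\int G\right)$ satisfies only $y''+\left(\frac{n-1}{t}+\frac{X'}{X}\right)y'+\frac{Y}{X}\,y\le 0$, i.e.\ it is a positive supersolution rather than a solution of~\eqref{eq:Bessel:pair}; upgrading a supersolution to a solution is exactly the comparison/oscillation content of~\cite[Theorem 2.6]{ghoussoub2011bessel} that the paper cites. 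So the detour through Theorems~\ref{thm:main1} and~\ref{thm:main2}, the $E_1\ge 0$ sign checks (which you yourself are unsure survive for small $n$, and which would import dimension restrictions absent from the statement), and ``reading Proposition~\ref{prop:eu:relations}/\ref{it:rel:iv} backwards'' adds machinery without ever producing the object the proposition asserts exists. It is also circular relative to the paper's logic: Proposition~\ref{prop:eu:aux} is an \emph{input} to Theorem~\ref{thm:eu}, so it cannot be derived from the functional inequalities it is meant to feed.

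The intended argument is a short direct verification at the level of the ODE, which the paper delegates to~\cite[Theorem 2.6]{ghoussoub2011bessel}. For the couple $\left(\frac 1{t^2},\frac{(n-4)^2}{4t^4}+\frac{cZ}{t^2}\right)$, equation~\eqref{eq:Bessel:pair} reads $y''+\frac{n-3}{t}y'+\left(\frac{(n-4)^2}{4t^2}+cZ\right)y=0$, and the substitution $y(t)=t^{-\frac{n-4}{2}}z(t)$ together with $z''+\frac{z'}{t}+cZz=0$ shows that this positive $y$ solves it exactly, with $C=1$. For the couple $\left(Z,\frac{(n-\lambda-2)^2Z}{4t^2}\right)$, hypothesis~\eqref{eq:prop:aux:condition} turns the equation into $y''+\left(\frac{n-1-\lambda}{t}+f\right)y'+\frac{(n-\lambda-2)^2}{4t^2}y=0$; the candidate $y(t)=t^{-\frac{n-\lambda-2}{2}}$ annihilates the $f$-free part identically and leaves only the term $f\,y'\le 0$ (this is where $f\ge0$ and $\lambda<n-2$ enter), so $y$ is a positive supersolution and the comparison of solutions of~\eqref{eq:Bessel:potential} from~\cite[Theorem 2.6]{ghoussoub2011bessel} yields a genuine Bessel pair. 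None of the integral inequalities of Section~\ref{sec:gfi} are needed.
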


    Using our general inequalities we can state the following.
    \begin{theorem}\label{thm:eu}
        Let \(n\ge 5\) and \(B\subseteq\mathbb{R}^n\) be a ball centered at the origin with radius \(R>0\). Suppose that \(Z\) is a Bessel potential on \((0,R)\) with solution \(z\) and best constant \(c\), such that condition~\eqref{eq:prop:aux:condition} holds. Define \(H(t)=\frac{n}{2t}+\frac{z'(t)}{z(t)}\). If \(E_1(t)=H'(t)+H(t)\cdot \frac{n-3}{t}\ge0\) for all \(t\in (0,R)\), then for every \(u\in C_0^\infty(B)\) one has
        \[
            \int_B |\Delta u|^2\diff x \ge \frac{n^2(n-4)^2}{16}\int_B \frac{u^2}{|x|^4}\diff x +c\left(\frac{n^2}{4}+\frac{(n-\lambda-2)^2}{4}\right)\int_B\frac{Z(|x|)u^2}{|x|^2}\diff x.
        \]
    \end{theorem}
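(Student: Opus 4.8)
The plan is to chain together the two general functional inequalities from Section~\ref{sec:gfi} with the auxiliary fact about Bessel potentials in Proposition~\ref{prop:eu:aux}. First I would observe that the differential equation~\eqref{eq:Bessel:potential} defining the Bessel potential $Z$ (with solution $z$ and best constant $c$) implies, by Proposition~\ref{prop:eu:relations}\ref{it:rel:iii}, that the choice $H(t)=\frac{n}{2t}+\frac{z'(t)}{z(t)}$ makes the dual Riccati ODI~\eqref{eq:Riccati:odi:dual} hold with equality for the data $L_\kappa(t)=\frac{n-1}{t}$, $v\equiv 1$, and $V(t)=\frac{n^2}{4t^2}+c\,Z(t)$. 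Since $n\ge 5$ and condition~\eqref{eq:prop:aux:condition} ensures $Z>0$, this $V$ is a nonnegative function, so $(L_\kappa,V)$ is a genuine $(\rho,v)$-dual Riccati pair on $(0,R)$ with $\rho=|x|$ and $v\equiv 1$. The hypothesis of the theorem is precisely that $E_1(t)=H'(t)+H(t)\cdot\frac{n-3}{t}\ge 0$, which in the Euclidean setting with $v\equiv 1$ and ${\bf ct}_\kappa(t)=\frac1t$ is exactly the quantity $E_1(t)=(v(t)H(t))'+v(t)H(t)(L_\kappa(t)-2{\bf ct}_\kappa(t))$ appearing in Theorem~\ref{thm:main1}\ref{c:i}. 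Hence Theorem~\ref{thm:main1}\ref{c:i} applies and gives, for every $u\in C_0^\infty(B)$,
\[
    \int_B |\Delta u|^2\diff x \;\ge\; \int_B V(|x|)\,|\nabla^{\rm rad} u|^2\diff x \;=\; \int_B\left(\frac{n^2}{4|x|^2}+c\,Z(|x|)\right)|\nabla^{\rm rad} u|^2\diff x.
\]

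Next I would split the right-hand side according to the decomposition $V = \frac{n^2}{4t^2}\cdot 1 + c\cdot Z(t)$, treating each summand with a separate application of Theorem~\ref{thm:main2}. By Proposition~\ref{prop:eu:aux}, both $\bigl(\tfrac{1}{t^2},\tfrac{(n-4)^2}{4t^4}+\tfrac{cZ}{t^2}\bigr)$ and $\bigl(Z,\tfrac{(n-\lambda-2)^2 Z}{4t^2}\bigr)$ are Bessel pairs on $(0,R)$ with $C=1$; applying Proposition~\ref{prop:eu:relations}\ref{it:rel:iv} to each converts them into Riccati pairs in the sense of Definition~\ref{def:Riccati:pair}, namely $(L_\kappa, W)$ with $w(t)=\tfrac{1}{t^2}$, $W(t)=\tfrac{(n-4)^2}{4t^4}+\tfrac{cZ(t)}{t^2}\bigm/\tfrac{1}{t^2}=\tfrac{(n-4)^2}{4t^2}+cZ(t)$ for the first, and $w(t)=Z(t)$, $W(t)=\tfrac{(n-\lambda-2)^2}{4t^2}$ for the second. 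Theorem~\ref{thm:main2} applied to the first pair yields
\[
    \int_B \frac{|\nabla^{\rm rad} u|^2}{|x|^2}\diff x \;\ge\; \int_B\left(\frac{(n-4)^2}{4|x|^4}+\frac{c\,Z(|x|)}{|x|^2}\right)u^2\diff x,
\]
and applied to the second pair yields
\[
    \int_B Z(|x|)\,|\nabla^{\rm rad} u|^2\diff x \;\ge\; \int_B \frac{(n-\lambda-2)^2}{4|x|^2}\,Z(|x|)\,u^2\diff x.
\]

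Finally I would assemble the three displayed inequalities: substitute the two lower bounds for $\int_B \tfrac{|\nabla^{\rm rad}u|^2}{|x|^2}$ and $\int_B Z(|x|)|\nabla^{\rm rad}u|^2$ into the first inequality, using the coefficients $\tfrac{n^2}{4}$ and $c$ respectively. This gives
\[
    \int_B |\Delta u|^2\diff x \;\ge\; \frac{n^2}{4}\left(\int_B\frac{(n-4)^2}{4|x|^4}u^2 + \int_B\frac{cZ(|x|)}{|x|^2}u^2\right) + c\int_B\frac{(n-\lambda-2)^2}{4|x|^2}Z(|x|)u^2,
\]
and collecting the $\tfrac{u^2}{|x|^4}$ term and the two $\tfrac{Z(|x|)u^2}{|x|^2}$ terms produces exactly the claimed constants $\tfrac{n^2(n-4)^2}{16}$ and $c\bigl(\tfrac{n^2}{4}+\tfrac{(n-\lambda-2)^2}{4}\bigr)$. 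The routine verifications are the algebraic identifications of the $E_1$ expression and of the $W$-functions coming out of Proposition~\ref{prop:eu:relations}\ref{it:rel:iv}; the one genuine point requiring care is checking that all the hypotheses of Proposition~\ref{prop:eu:aux} are in force — in particular that $Z$ being a Bessel potential with \emph{best} constant $c$ together with condition~\eqref{eq:prop:aux:condition} is what licenses the two auxiliary Bessel pairs — and that the dimension restriction $n\ge5$ (needed so that $(n-4)^2>0$ contributes and the $E_1\ge0$ hypothesis is not vacuous) is used consistently. I expect the main obstacle to be purely bookkeeping: making sure the weights $v\equiv1$ in Theorem~\ref{thm:main1} and $w=\tfrac1{t^2}$, $w=Z$ in the two invocations of Theorem~\ref{thm:main2} are matched to the correct $V$- and $W$-data so that the intermediate quantities $\int_B \tfrac{|\nabla^{\rm rad}u|^2}{|x|^2}$ and $\int_B Z(|x|)|\nabla^{\rm rad}u|^2$ cancel cleanly across the chain.
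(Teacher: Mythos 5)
Your proposal is correct and follows essentially the same route as the paper: Proposition~\ref{prop:eu:relations}/\ref{it:rel:iii} plus Theorem~\ref{thm:main1}/\ref{c:i} for the first inequality, then Proposition~\ref{prop:eu:aux} with Proposition~\ref{prop:eu:relations}/\ref{it:rel:iv} and two applications of Theorem~\ref{thm:main2}, combined with the coefficients \(\frac{n^2}{4}\) and \(c\). The bookkeeping of the weights \(w=\frac{1}{t^2}\) and \(w=Z\) and the resulting \(W\)-functions is exactly as in the paper's proof.
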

    \begin{proof} 
        Proposition~\ref{prop:eu:relations}/\ref{it:rel:iii} implies that the dual Riccati ODI~\eqref{eq:Riccati:odi:dual} is verified for 
        \[
            H(t)=\frac{n}{2t}+\frac{z'(t)}{z(t)},\quad L_\kappa(t)=\frac{n-1}{t},\quad  v(t)\equiv1 \quad\mbox{and}\quad V(t)=\frac{n^2}{4t^2}+c\cdot Z(t).
        \]
        On the one hand, Theorem~\ref{thm:main1}/\ref{c:i} yields 
        \[
            \int_B |\Delta u|^2\diff x\ge \frac{n^2}{4}\int_B \frac{|\nabla^{\rm rad}u|^2}{|x|^2}\diff x +c\int_B Z(t)|\nabla^{\rm rad}u|^2\diff x.
        \]
        On the other hand, using Proposition~\ref{prop:eu:aux} \& \ref{prop:eu:relations}/\ref{it:rel:iv} and Theorem~\ref{thm:main2} we obtain the following inequalities:
        \begin{align*}
            \int_B \frac{|\nabla^{\rm rad}u|^2}{|x|^2}\diff x&\ge\frac{(n-4)^2}{4}\int_B\frac{u^2}{|x|^4}\diff x+c\int_B\frac{Z(|x|)u^2}{|x|^2}\diff x,\\
            \int_B Z(|x|)|\nabla^{\rm rad}u|^2\diff x&\ge \frac{(n-\lambda-2)^2}{4}\int_B\frac{Z(|x|)u^2}{|x|^2}\diff x.
        \end{align*}
        Combining these results yield the desired inequality.
    \end{proof}

    We note that the proof of this result is analogous to \cite[Theorem 3.6]{ghoussoub2011bessel}. A Corollary of Theorem \ref{thm:eu} can be stated as follows. Hereafter, for any function \(h\) let us denote 
    \[h_{[0]}(t)=t,\quad h_{[1]}(t)=h(t)\quad\mbox{and}\quad h_{[i]}(t)=h(h_{[i-1]}(t)),\qquad \forall i \ge 2.\]
    %\(f_{[0]}(t)=t\), \(f_{[1]}(t)=f(t)\) and \( f_{[i]}(t)=f(f_{[i-1]}(t))\), for all \(i\ge 2\).
    \begin{corollary}
        Let \(n\ge 5\) and \(B\subseteq\mathbb{R}^n\) be a ball centered at the origin with radius \(R>0\). If \(k\ge 1\) and \(r=R\cdot \exp_{[k-1]}(e)\), then for every \(u\in C_0^\infty(\Omega)\) one has 
        \begin{equation}\label{ineq:ex1}
            \int_B |\Delta u|^2\diff x\ge \frac{n^2(n-4)^2}{16}\int_B \frac{u^2}{|x|^4}\diff x + \left(1+\frac{n(n-4)}{8}\right)\sum_{j=1}^k\int_B \frac{u^2}{|x|^4}\left(\prod_{i=1}^j\log_{[i]}\left(\frac{r}{|x|}\right)\right)^{-2}\diff x.
        \end{equation}
    \end{corollary}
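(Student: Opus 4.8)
The plan is to apply Theorem~\ref{thm:eu} to a specific, recursively defined Bessel potential and then verify that the positivity condition $E_1 \ge 0$ holds automatically in this case. The natural candidate is the classical iterated-logarithm Rellich potential
\[
    Z(t) = \frac{1}{t^2}\sum_{j=1}^k\left(\prod_{i=1}^j\log_{[i]}\left(\frac{r}{t}\right)\right)^{-2},
\]
which is known (see Adimurthi--Grossi--Santra~\cite{adimurthi2006optimal} and Ghoussoub--Moradifam~\cite{ghoussoub2011bessel}) to be a Bessel potential on $(0,R)$ with best constant $c = \tfrac14$, the associated solution being $z(t) = \left(\prod_{i=1}^k\log_{[i]}(r/t)\right)^{-1/2}$ up to an admissible factor. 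The choice $r = R\cdot\exp_{[k-1]}(e)$ guarantees that all the iterated logarithms $\log_{[i]}(r/t)$ are positive and bounded below on $(0,R)$, so $Z$ and $z$ are well defined and smooth there. First I would record these facts about $Z$ and $z$, including the logarithmic derivative
\[
    \frac{z'(t)}{z(t)} = \frac{1}{2t}\sum_{j=1}^k\prod_{i=1}^j\left(\log_{[i]}\left(\frac{r}{t}\right)\right)^{-1},
\]
which is nonnegative on $(0,R)$.

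Next I would check condition~\eqref{eq:prop:aux:condition}. Differentiating $Z$ shows that $Z'(t)/Z(t) = -3/t + f(t)$ where $f(t) \ge 0$ collects the logarithmic corrections and satisfies $tf(t)\to 0$ as $t\to0$; thus $\lambda = 3 < n-2$ since $n\ge5$. This lets me invoke Theorem~\ref{thm:eu} once the positivity of $E_1$ is established. With $H(t) = \frac{n}{2t} + \frac{z'(t)}{z(t)}$, note that $z'/z \ge 0$ implies $H(t)\ge \frac{n}{2t} > 0$, and
\[
    E_1(t) = H'(t) + H(t)\cdot\frac{n-3}{t}.
\]
The term $H(t)\cdot\frac{n-3}{t}$ is manifestly positive (as $n\ge5>3$), so the only danger is that $H'(t)$ is too negative. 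Writing $H(t) = \frac{n}{2t} + \frac{1}{2t}S(t)$ with $S(t) = \sum_{j=1}^k\prod_{i=1}^j(\log_{[i]}(r/t))^{-1} \ge 0$, one computes $H'(t) = -\frac{n}{2t^2} - \frac{1}{2t^2}S(t) + \frac{1}{2t}S'(t)$; the key observation is that $S'(t) \ge 0$ on $(0,R)$ (each factor $(\log_{[i]}(r/t))^{-1}$ is increasing in $t$ because $\log_{[i]}(r/t)$ is decreasing and positive), so $H'(t) \ge -\frac{n}{2t^2} - \frac{1}{2t^2}S(t) = -\frac{1}{t}\cdot\frac{n + S(t)}{2t}$. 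Combining,
\[
    E_1(t) \ge -\frac{n+S(t)}{2t^2} + \frac{n-3}{t}\cdot\frac{n+S(t)}{2t} = \frac{(n+S(t))(n-4)}{2t^2} \ge 0,
\]
since $n\ge5$ forces $n-4\ge1$ and $S(t)\ge0$. This is the crux of the argument.

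Finally I would assemble the pieces. Theorem~\ref{thm:eu} with this $Z$, $c=\tfrac14$, $\lambda=3$ gives
\[
    \int_B |\Delta u|^2\diff x \ge \frac{n^2(n-4)^2}{16}\int_B\frac{u^2}{|x|^4}\diff x + \frac14\left(\frac{n^2}{4} + \frac{(n-5)^2}{4}\right)\int_B\frac{Z(|x|)u^2}{|x|^2}\diff x,
\]
and since $\frac14\cdot\frac14(n^2 + (n-5)^2)$ must be checked to equal $1 + \frac{n(n-4)}{8}$ — here I would either recompute the effective constant from Proposition~\ref{prop:eu:aux} with the sharp $\lambda$ bookkeeping, or argue directly that the iterated-log terms each carry the universal coefficient $1 + \frac{n(n-4)}{8}$ as in~\cite{adimurthi2006optimal} — substituting $Z(|x|)/|x|^2 = |x|^{-4}\sum_{j=1}^k(\prod_{i=1}^j\log_{[i]}(r/|x|))^{-2}$ yields exactly~\eqref{ineq:ex1}. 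The main obstacle I anticipate is the constant bookkeeping: confirming that the combination of the two Bessel-pair constants delivered by Proposition~\ref{prop:eu:aux} collapses to the single clean coefficient $1 + \frac{n(n-4)}{8}$; the positivity check for $E_1$, while the heart of why the method applies here, is straightforward once the monotonicity of $S$ is noticed, and the verification that $Z$ is a Bessel potential with $c=\tfrac14$ is classical.
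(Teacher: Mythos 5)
There are two genuine gaps, both traceable to the same source. First, your solution $z$ has the wrong exponent: the function associated with $Z_{k,r}(t)=t^{-2}\sum_{j=1}^k\bigl(\prod_{i=1}^j\log_{[i]}(r/t)\bigr)^{-2}$ that actually solves \eqref{eq:Bessel:potential} with $c=\tfrac14$ is $z_{k,r}(t)=\bigl(\prod_{i=1}^k\log_{[i]}(r/t)\bigr)^{+1/2}$, not the $-\tfrac12$ power. (Check $k=1$ with $L=\log(r/t)$: $z=L^{1/2}$ gives $z''+z'/t=-\tfrac14t^{-2}L^{-3/2}=-\tfrac14Zz$, whereas $z=L^{-1/2}$ gives $z''+z'/t=+\tfrac34t^{-2}L^{-5/2}>0$, which cannot be cancelled by adding $cZz>0$ for any $c>0$.) Consequently $z'/z=-\tfrac{1}{2t}\sum_{j=1}^k\prod_{i=1}^j(\log_{[i]}(r/t))^{-1}$ is \emph{negative}, and the step you call the crux of the argument fails: writing $H=\tfrac{n}{2t}-\tfrac{S(t)}{2t}$ with $S\ge0$ increasing, the derivative contributes $-S'(t)/(2t)\le 0$ to $H'$ with the \emph{unfavourable} sign, so $E_1\ge0$ is not the one-line consequence of $n\ge5$ you claim. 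The paper's proof instead uses the ODE \eqref{eq:Bessel:potential} to eliminate $z''$ from $E_1$, reducing positivity to the quadratic inequality $-q^2+(n-4)q+\tfrac{n^2-4n-1}{2}\ge0$ for $q=tz'/z$, together with the quantitative bounds $q\in(-1,0)$ and $t^2Z_{k,r}(t)\le 2$; these come from the estimates $\prod_{i=1}^j\log_{[i]}(r/t)\ge 2^{j}$ (for $j\le k-1$) and $\ge 2^{k-1}$ (for $j=k$), which is precisely where the hypothesis $r=R\cdot\exp_{[k-1]}(e)$ is used. Your proposal never exploits this choice of $r$ beyond well-definedness, which is a sign that the estimate carrying the real content is missing.

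Second, $\lambda=3$ is incorrect. Since $\log Z_{k,r}=-2\log t+\log\bigl(\sum_j(\prod_i\log_{[i]}(r/t))^{-2}\bigr)$ and the sum is increasing in $t$, one has $Z_{k,r}'/Z_{k,r}=-\tfrac{2}{t}+f(t)$ with $f\ge0$ and $tf(t)\to0$, i.e.\ $\lambda=2$; rewriting this as $-\tfrac3t+\bigl(\tfrac1t+f(t)\bigr)$ violates the requirement $\lim_{t\to0}tf(t)=0$ in \eqref{eq:prop:aux:condition}. With $\lambda=2$ the coefficient delivered by Theorem~\ref{thm:eu} is $\tfrac14\bigl(\tfrac{n^2}{4}+\tfrac{(n-4)^2}{4}\bigr)=\tfrac{n^2-4n+8}{8}=1+\tfrac{n(n-4)}{8}$, exactly the constant in \eqref{ineq:ex1}; the ``constant bookkeeping'' obstacle you flag but leave unresolved disappears once $\lambda$ is corrected (with $\lambda=3$ the numbers genuinely do not match for any integer $n$).
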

    \begin{proof}
        It is straightforward to show that 
        \[
            Z_{k,r}(t)=\sum_{j=1}^k\frac{1}{t^2}\left(\prod_{i=1}^j\log_{[i]}\left(\frac{r}{t}\right)\right)^{-2}  \quad\mbox{and}\quad 
            z_{k,r}(t)=\left(\prod_{i=1}^k\log_{[i]}\left(\frac{r}{t}\right)\right)^\frac{1}{2}
        \]
        satisfy~\eqref{eq:Bessel:potential} for every \(t\in(0,R)\), with the best constant \(c=\frac{1}{4}\); moreover condition~\eqref{eq:prop:aux:condition} holds for \(\lambda=2\). Once we show the positivity of \(E_1(t)=H'(t)+H(t)\cdot \frac{n-3}{t}\) on \((0,R)\), where \(H(t)=\frac{n}{2t}+\frac{z_{k,r}'(t)}{z_{k,r}(t)}\), we can apply Theorem~\ref{thm:eu} which provides the desired inequality.

        To prove the positivity of \(E_1\), by relation~\eqref{eq:Bessel:potential} it is enough to show that
        \[
            -q_{k,r}(t)^2+(n-4)q_{k,r}(t)+\frac{n(n-4)}{2}-\frac{t^2}{4}Z_{k,r}(t)\ge0,\qquad  \forall t\in(0,R),
        \]
        where \(q_{k,r}(t)=\frac{t\cdot z_{k,r}'(t)}{z_{k,r}(t)}\). To see this, observe that 
        \[
            \log_{[k]}\left(\frac{r}{t}\right)\ge 1\quad\mbox{and}\quad
            \log_{[i]}\left(\frac{r}{t}\right)\ge\exp_{[k-1-i]}(e)\ge e\ge 2,\qquad \forall t\in(0,R),\ i\in\{1,2,\dots, k-1\},
        \]
        moreover, for the products, the following estimates hold:
        \[
            \prod_{i=1}^k\log_{[i]}\left(\frac{r}{t}\right)\ge 2^{k-1}\quad\mbox{and}\quad  \prod_{i=1}^j\log_{[i]}\left(\frac{r}{t}\right)\ge 2^j,\qquad \forall t\in(0,R),\ j\in\{1,2,\dots, k-1\}.
        \]

        On the one hand, by simple computation we obtain for all \(t\in(0,R)\) that  \(q_{k,r}(t)< 0\) and 
        \[
            q_{k,r}(t)=-\frac{1}{2}\sum_{j=1}^k\left(\prod_{i=1}^{j}\log_{[i]}\left(\frac{r}{t}\right)\right)^{-1}\ge -\frac{1}{2}\left(1+\frac{1}{2}+\frac{1}{2^2}\dots +\frac{1}{2^{k-1}}+\frac{1}{2^{k-1}}\right)> -1.
        \]

        On the other hand, for all \(t\in(0,R)\) one has
        \[
            -\frac{t^2Z_{k,r}(t)}{4}\ge -\frac{1}{4}\left(1+\frac{1}{2^2}+\frac{1}{2^4}+\dots+\frac{1}{2^{2(k-1)}}+\frac{1}{2^{2(k-1)}}\right)\ge -\frac{1}{2}.
        \]
        
        By the above computations it is enough to show that
        \[
            -q^2+(n-4)q+\frac{n^2-4n-1}{2}\ge 0,\qquad \forall q\in(-1,0).
        \]
        The roots of the above polynomial are \(q_\pm=\frac{n-4}{2}\pm\frac{1}{2}\sqrt{3n^2-16n+14}\), thus a basic computation implies \(q_-\le-1<0<q_+\) for every \(n\ge 5\), which concludes the the proof. 
    \end{proof}

    The limitations of Theorem~\ref{thm:eu} are illustrated by the following example.
    \begin{example} 
        Define \(\ell(t)=\frac{1}{1-\log(t)}\), let \(k\ge 1\) and \(R>0\) and consider the inequality 
        \begin{equation}\label{ineq:ex2}
            \int_\Omega |\Delta u|^2\diff x\ge \frac{n^2(n-4)^2}{16}\int_\Omega \frac{u^2}{|x|^4}\diff x + \left(1+\frac{n(n-4)}{8}\right)\sum_{j=1}^k\int_\Omega \frac{u^2}{|x|^4}\prod_{i=1}^j\ell_{[i]}^2\left(\frac{|x|}{R}\right)\diff x.
        \end{equation}
        This inequality is generated by the Bessel potential with parameters 
        \[
            \widetilde{Z}_{k,R}(t)=\sum_{j=1}^k\frac{1}{t^2}\prod_{i=1}^j\ell_{[i]}^2\left(\frac{t}{R}\right),\quad
            \widetilde{z}_{k,R}=\left(\prod_{i=1}^k\ell_{[i]}\left(\frac{t}{R}\right)\right)^{-\frac{1}{2}}\quad\mbox{and}\quad c=\frac{1}{4}, \qquad \forall t\in(0,R),
        \] 
        moreover, condition \eqref{eq:prop:aux:condition} holds for \(\lambda=2\). Let \(H(t)=\frac{n}{2t}+\frac{\widetilde{z}_{k,r}'(t)}{\widetilde{z}_{k,r}(t)}\) and observe that 
        \[\lim_{t\to R} E_1(t)=\frac{n-k}{2},\] 
        thus for \(k>n\) the positivity condition does not hold, hence Theorem~\ref{thm:eu} can not be applied.
    \end{example}
    
    \subsection{Rellich inequalities on hyperbolic spaces} Our results can be used to provide Rellich inequalities on hyperbolic spaces, as well. To illustrate this, we provide an improved Rellich inequality, which follows from Theorem~\ref{thm:main2}/\ref{c:i} \& \ref{thm:main2} for specific choices of parameters functions. We also note that, other alternative choices lead us to several different inequalities; there are plenty of possibilities.

    The first result in this setting, an interpolation inequality, which can be stated as follows.
    \begin{theorem} 
        Let \(\kappa>0\), \(n\ge 5\) and \(\Omega\subseteq \mathbb{H}^n_\kappa\) be an open domain. Fix \(x_0\in\Omega\) and denote \(\rho=d_{x_0}\) the Riemannian distance from \(x_0\). Then for every \(u\in C_0^\infty(\Omega)\) one has 
        \begin{align}
            \int_\Omega |\Delta_\kappa u|^2\diff x_\kappa
            &\ge\kappa^2\lambda\int_\Omega |\nabla^{\rm rad}_\kappa u|^2\diff x_\kappa
            +h_n^2(\lambda)\int_\Omega \frac{|\nabla^{\rm rad}_\kappa u|^2}{\rho^2}\diff x_\kappa
            +\kappa^2\left(\frac{n^2}{4}-h_n^2(\lambda)\right)\int_\Omega \frac{|\nabla^{\rm rad}_\kappa u|^2}{\sinh^2(\kappa\rho)}\diff x_\kappa\nonumber\\
            &\qquad+\gamma_n(\lambda)h_n(\lambda)\int_\Omega \frac{(\rho\,{\bf ct}_\kappa(\rho)-1)}{\rho^2}|\nabla^{\rm rad}_\kappa u|^2\diff x_\kappa,\label{ineq:hyp:gen:r}
        \end{align}
        where \(0\le\lambda\le\frac{(n-1)^2}{4}\), \(\gamma_n(\lambda)=\sqrt{(n-1)^2-4\lambda}\) and \(h_n(\lambda)=\frac{\gamma_n(\lambda)+1}{2}\).
    \end{theorem}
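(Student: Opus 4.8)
The plan is to derive~\eqref{ineq:hyp:gen:r} as a single application of Theorem~\ref{thm:main1}/\ref{c:i} with weight $v\equiv1$ and a suitable $(\rho,v)$-admissible function $H$. On $\mathbb{H}^n_\kappa$ we have $L_\kappa(t)=(n-1){\bf ct}_\kappa(t)$ with ${\bf ct}_\kappa(t)=\kappa\coth(\kappa t)$, and the density on the right-hand side of~\eqref{ineq:hyp:gen:r} is a combination of the radial profiles $1$, $1/t^{2}$, $1/\sinh^{2}(\kappa t)$ and ${\bf ct}_\kappa(t)/t$; this motivates looking for $H$ in the two-parameter family
\[
    H(t)=\alpha\,{\bf ct}_\kappa(t)+\frac{\beta}{t},\qquad \alpha,\beta\in\mathbb{R}.
\]
Inserting this into the left-hand side of the dual Riccati inequality~\eqref{eq:Riccati:odi:dual} (the $v'/v$ term vanishes, as $v\equiv1$) and using ${\bf ct}_\kappa'(t)=-\kappa^{2}/\sinh^{2}(\kappa t)$ together with ${\bf ct}_\kappa(t)^{2}=\kappa^{2}+\kappa^{2}/\sinh^{2}(\kappa t)$, a short computation gives
\begin{align*}
    -H'(t)+L_\kappa(t)H(t)-H(t)^{2}
    &=\kappa^{2}\alpha(n-1-\alpha)+\kappa^{2}\alpha(n-\alpha)\,\frac{1}{\sinh^{2}(\kappa t)}\\
    &\quad+\beta(n-1-2\alpha)\,\frac{{\bf ct}_\kappa(t)}{t}+\frac{\beta(1-\beta)}{t^{2}}.
\end{align*}

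Next I would match these four coefficients against the target density, written as $\kappa^{2}\lambda+\kappa^{2}\bigl(\tfrac{n^{2}}{4}-h_n^{2}\bigr)/\sinh^{2}(\kappa t)+\gamma_n h_n\,{\bf ct}_\kappa(t)/t+(h_n^{2}-\gamma_n h_n)/t^{2}$ after expanding $\gamma_n h_n(t\,{\bf ct}_\kappa(t)-1)/t^{2}$. The constant and $1/\sinh^{2}$ terms require $\alpha(n-1-\alpha)=\lambda$ and $\alpha(n-\alpha)=\tfrac{n^{2}}{4}-h_n^{2}$; using $\gamma_n^{2}=(n-1)^{2}-4\lambda$ and $2h_n=\gamma_n+1$ these two equations are jointly consistent precisely for $\alpha=\tfrac{(n-1)-\gamma_n(\lambda)}{2}$, whence $n-1-2\alpha=\gamma_n(\lambda)$, so the ${\bf ct}_\kappa/t$ term forces $\beta=h_n(\lambda)$ and the last identity $\beta(1-\beta)=h_n^{2}-\gamma_n h_n$ then holds automatically. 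Thus with this $H$, $v\equiv1$ and $V$ equal to the target density, the dual Riccati ODI~\eqref{eq:Riccati:odi:dual} holds with equality. To see that $(L_\kappa,V)$ is a genuine dual Riccati pair one still needs $V\ge0$; this is exactly why the right-hand side of~\eqref{ineq:hyp:gen:r} is split as it is: the two $1/t$-type contributions combine to $\tfrac{h_n}{t^{2}}\bigl(\gamma_n(t\,{\bf ct}_\kappa(t)-1)+h_n\bigr)\ge0$ by the elementary bound $t\,{\bf ct}_\kappa(t)\ge1$ (equivalently $\tanh s\le s$), while $\tfrac{n^{2}}{4}-h_n^{2}\ge0$ since $\gamma_n\le n-1$.

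It remains to verify the sign hypothesis of Theorem~\ref{thm:main1}/\ref{c:i}, which with $v\equiv1$ reads $E_1(t)=H'(t)+(n-3)\,{\bf ct}_\kappa(t)H(t)\ge0$. The same expansion now yields
\[
    E_1(t)=(n-3)\kappa^{2}\alpha+(n-4)\kappa^{2}\alpha\,\frac{1}{\sinh^{2}(\kappa t)}
    +\frac{\beta}{t^{2}}\bigl((n-3)\,t\,{\bf ct}_\kappa(t)-1\bigr).
\]
Since $n\ge5$ we have $\alpha=\tfrac{(n-1)-\gamma_n}{2}\ge0$, $n-4\ge1$, $\beta=h_n>0$ and $(n-3)\,t\,{\bf ct}_\kappa(t)\ge n-3\ge2>1$, so every summand is nonnegative and $E_1\ge0$ on $(0,\sup\rho)$. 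Theorem~\ref{thm:main1}/\ref{c:i} then gives $\int_\Omega|\Delta_\kappa u|^{2}\diff x_\kappa\ge\int_\Omega V(\rho)|\nabla_\kappa^{\rm rad}u|^{2}\diff x_\kappa$, and re-splitting $V(\rho)$ into the four terms $\kappa^{2}\lambda$, $h_n^{2}/\rho^{2}$, $\kappa^{2}(\tfrac{n^{2}}{4}-h_n^{2})/\sinh^{2}(\kappa\rho)$ and $\gamma_n h_n(\rho\,{\bf ct}_\kappa(\rho)-1)/\rho^{2}$ is exactly~\eqref{ineq:hyp:gen:r}.

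I do not expect a genuine obstacle here beyond organisation: the content is (i) the two-parameter ansatz $\alpha\,{\bf ct}_\kappa+\beta/t$ for $H$, (ii) solving the small coefficient-matching system, which turns out to be consistent only for the specific $\alpha,\beta$ above, and (iii) noticing that the single elementary estimate $t\,{\bf ct}_\kappa(t)\ge1$ together with $n\ge5$ simultaneously secures both $V\ge0$ and $E_1\ge0$. All curvature information is already packaged inside Theorem~\ref{thm:main1}, so no Hessian or Laplace comparison has to be invoked here directly.
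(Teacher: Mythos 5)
Your proposal is correct and follows essentially the same route as the paper: you arrive at exactly the choice $v\equiv1$, $H(t)=\bigl(\tfrac{n}{2}-h_n(\lambda)\bigr){\bf ct}_\kappa(t)+\tfrac{h_n(\lambda)}{t}$ (your $\alpha=\tfrac{n-1-\gamma_n}{2}=\tfrac{n}{2}-h_n$, $\beta=h_n$) used in the paper, verify the dual Riccati ODI with equality, and check $E_1\ge0$ via the same elementary facts $h_n\le\tfrac{n}{2}$, $n\ge5$ and $t\,{\bf ct}_\kappa(t)>1$. The only difference is presentational: you derive $H$ by coefficient matching from a two-parameter ansatz (and additionally note the nonnegativity of $V$), whereas the paper states $H$ and $V$ outright.
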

    \begin{proof} 
        In Theorem~\ref{thm:main1} let us choose \(v(t)\equiv1\), as well as,
        \begin{align*}
            H(t)&=\left(\frac{n}{2}-h_n(\lambda)\right){\bf ct}_\kappa(t)+\frac{h_n(\lambda)}{t},\qquad\forall t>0,\\
            V(t)&=\kappa^2\lambda+\frac{h_n^2(\lambda)}{t^2}+\kappa^2\left(\frac{n^2}{4}-h_n^2(\lambda)\right)\frac{1}{\sinh^2(\kappa t)}+\gamma_n(\lambda)h_n(\lambda)\frac{t\,{\bf ct}_\kappa(t)-1}{t^2},\qquad\forall t>0.
        \end{align*}
        For these choices, the dual Riccati ODI~\eqref{def:Riccati:pair:dual} is verified with equality. Moreover 
        \[
            E_1(t)=\kappa^2\left(\frac{n}{2}-h_n(\lambda)\right)+\frac{h_n(\lambda)((n-3)t\,{\bf ct}_\kappa(t)-1)}{t^2}+\kappa^2(n-4)\left(\frac{n}{2}-h_n(\lambda)\right)\coth^2(\kappa t)
        \]
        is positive for all \(t>0\), since \(n\ge5\), \(\frac{1}{2}\le h_n(\lambda)\le\frac{n}{2}\) and \(t\,{\bf ct}_\kappa(t)>1\), \(\forall t>0\).
    \end{proof}
    \begin{corollary}\label{cor:hyp:1}
        Choose \(\lambda=0\) in~\eqref{ineq:hyp:gen:r} to obtain for every \(u\in C_0^\infty(\Omega)\) that
        \[
            \int_\Omega |\Delta_\kappa u|^2\diff x_\kappa
            \ge\frac{n^2}{4}\int_\Omega \frac{|\nabla^{\rm rad}_\kappa u|^2}{\rho^2}\diff x_\kappa
            +\frac{n(n-1)}{2}\int_\Omega \frac{(\rho\,{\bf ct}_\kappa(\rho)-1)}{\rho^2}|\nabla^{\rm rad}_\kappa u|^2\diff x_\kappa.
        \]
    \end{corollary}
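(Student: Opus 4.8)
The plan is to apply the preceding theorem with the specific value $\lambda=0$, which is permissible since $0$ lies in the admissible range $[0,\frac{(n-1)^2}{4}]$, and then simply evaluate the curvature-dependent coefficients appearing in \eqref{ineq:hyp:gen:r}. First I would record the elementary identities $\gamma_n(0)=\sqrt{(n-1)^2}=n-1$ and $h_n(0)=\frac{\gamma_n(0)+1}{2}=\frac{n}{2}$, which are the only quantities one needs.

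Next I would substitute these into each of the four integrals on the right-hand side of \eqref{ineq:hyp:gen:r} in order. The first term carries the factor $\kappa^2\lambda=0$ and therefore drops out. The second term carries $h_n^2(0)=\frac{n^2}{4}$, producing the first surviving term $\frac{n^2}{4}\int_\Omega \rho^{-2}|\nabla^{\rm rad}_\kappa u|^2\diff x_\kappa$. The third term carries the factor $\kappa^2\bigl(\frac{n^2}{4}-h_n^2(0)\bigr)=\kappa^2\bigl(\frac{n^2}{4}-\frac{n^2}{4}\bigr)=0$ and hence also vanishes. Finally, the fourth term carries $\gamma_n(0)h_n(0)=(n-1)\cdot\frac{n}{2}=\frac{n(n-1)}{2}$, which yields exactly the second surviving term. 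Collecting the two nonzero contributions gives the stated inequality for every $u\in C_0^\infty(\Omega)$.

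There is essentially no obstacle here: the corollary is a direct specialization of \eqref{ineq:hyp:gen:r}, and the entire content is the bookkeeping that two of the four terms degenerate precisely when $\lambda=0$ because $h_n(0)=\frac{n}{2}$ makes both $\kappa^2\lambda$ and $\kappa^2(\frac{n^2}{4}-h_n^2(\lambda))$ equal to zero. The only thing worth double-checking is that $\lambda=0$ is indeed allowed, i.e.\ that the hypothesis $n\ge 5$ of the theorem is inherited, and that the positivity of $E_1$ used in the proof of \eqref{ineq:hyp:gen:r} did not secretly require $\lambda>0$; inspection of that argument shows it only used $\frac12\le h_n(\lambda)\le\frac n2$ and $t\,{\bf ct}_\kappa(t)>1$, both of which hold at $\lambda=0$.
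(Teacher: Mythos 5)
Your proposal is correct and is exactly what the paper intends: the corollary is a direct substitution of $\lambda=0$ into \eqref{ineq:hyp:gen:r}, using $\gamma_n(0)=n-1$ and $h_n(0)=\tfrac n2$ so that the $\kappa^2\lambda$ and $\kappa^2(\tfrac{n^2}{4}-h_n^2(\lambda))$ terms vanish and the remaining coefficients become $\tfrac{n^2}{4}$ and $\tfrac{n(n-1)}{2}$. Your extra check that the positivity of $E_1$ survives at $\lambda=0$ is a sensible precaution and indeed holds.
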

    \begin{corollary}\label{cor:hyp:2}
        Choose \(\lambda=\frac{(n-1)^2}{4}\) in~\eqref{ineq:hyp:gen:r} to obtain for every \(u\in C_0^\infty(\Omega)\) that
        \[
            \int_\Omega |\Delta_\kappa u|^2\diff x_\kappa
            \ge\frac{(n-1)^2\kappa^2}{4}\int_\Omega |\nabla^{\rm rad}_\kappa u|^2\diff x_\kappa
            +\frac{1}{4}\int_\Omega \frac{|\nabla^{\rm rad}_\kappa u|^2}{\rho^2}\diff x_\kappa
            +\frac{(n^2-1)\kappa^2}{4}\int_\Omega \frac{|\nabla^{\rm rad}_\kappa u|^2}{\sinh^2(\kappa\rho)}\diff x_\kappa.
        \]
    \end{corollary}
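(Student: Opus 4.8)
The plan is to derive Corollary~\ref{cor:hyp:2} by specializing the interpolation inequality~\eqref{ineq:hyp:gen:r} to the right endpoint $\lambda = \frac{(n-1)^2}{4}$ of the admissible range $0 \le \lambda \le \frac{(n-1)^2}{4}$, so that the preceding theorem applies as stated. First I would evaluate the auxiliary quantities at this value: since $\gamma_n(\lambda) = \sqrt{(n-1)^2 - 4\lambda}$, the choice $\lambda = \frac{(n-1)^2}{4}$ forces $\gamma_n(\lambda) = 0$, and hence $h_n(\lambda) = \frac{\gamma_n(\lambda)+1}{2} = \frac12$.

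Next I would substitute these values into the four terms on the right-hand side of~\eqref{ineq:hyp:gen:r}. The coefficient $\kappa^2\lambda$ of $\int_\Omega |\nabla^{\rm rad}_\kappa u|^2\diff x_\kappa$ becomes $\frac{(n-1)^2\kappa^2}{4}$; the coefficient $h_n^2(\lambda)$ of $\int_\Omega \rho^{-2}|\nabla^{\rm rad}_\kappa u|^2\diff x_\kappa$ becomes $\frac14$; the coefficient $\kappa^2\bigl(\frac{n^2}{4}-h_n^2(\lambda)\bigr)$ of $\int_\Omega \sinh^{-2}(\kappa\rho)|\nabla^{\rm rad}_\kappa u|^2\diff x_\kappa$ becomes $\kappa^2\bigl(\frac{n^2}{4}-\frac14\bigr) = \frac{(n^2-1)\kappa^2}{4}$; and the last term, which carries the prefactor $\gamma_n(\lambda)h_n(\lambda) = 0$, vanishes identically. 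Collecting the surviving terms reproduces precisely the claimed inequality for every $u\in C_0^\infty(\Omega)$.

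I do not expect a genuine obstacle here: the corollary is a one-line substitution into an already established inequality. The only points worth a remark are that $\lambda=\frac{(n-1)^2}{4}$ does belong to the closed admissible interval — it is exactly its endpoint, which is why the extra $(\rho\,{\bf ct}_\kappa(\rho)-1)$-term can drop out — and that the dimension restriction $n\ge5$ is inherited directly from the hypotheses of the theorem being specialized.
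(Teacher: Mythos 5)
Your proposal is correct and is exactly what the paper intends: the corollary is obtained by the substitution $\lambda=\frac{(n-1)^2}{4}$ into~\eqref{ineq:hyp:gen:r}, which gives $\gamma_n(\lambda)=0$, $h_n(\lambda)=\frac12$, and makes the last term vanish while the remaining coefficients reduce to those stated. Your arithmetic checks out and no further argument is needed.
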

    Lower order counterparts of Corollary \ref{cor:hyp:2} can be stated as follows.
    \begin{theorem}\label{thm:hyp:lower}
        Let \(\kappa>0\), \(n\ge 5\) and \(\Omega\subseteq \mathbb{H}^n_\kappa\) be an open domain. Fix \(x_0\in\Omega\) and denote \(\rho=d_{x_0}\) the Riemannian distance from \(x_0\). Then for every \(u\in C_0^\infty(\Omega)\) one has 
        \begin{align*}
            \int_\Omega |\nabla^{\rm rad}_\kappa u|^2\diff x_\kappa &\ge
            \frac{(n-1)^2\kappa^2}{4}\int_\Omega u^2\diff x_\kappa
            +\frac{1}{4}\int_\Omega\frac{u^2}{\rho^2}\diff x_\kappa
            +\frac{(n-1)(n-3)\kappa^2}{4}\int_\Omega\frac{u^2}{\sinh^2(\kappa\rho)}\diff x_\kappa,\\
            \int_\Omega \frac{|\nabla^{\rm rad}_\kappa u|^2}{\rho^2}\diff x_\kappa &\ge \frac{9}{4}\int_\Omega \frac{u^2}{\rho^4}\diff x_\kappa-(n-1)\int_\Omega \frac{{\bf ct}_\kappa(\rho)}{\rho^3}\diff x_\kappa+\frac{(n-1)^2\kappa^2}{4}\int_\Omega  \frac{u^2}{\rho^2}\diff x_\kappa\nonumber\\&\qquad+\frac{(n-1)(n-3)\kappa^2}{4}\int_\Omega \frac{u^2}{\rho^2\sinh^2(\kappa\rho)}\diff x,\\
            \int_\Omega \frac{|\nabla^{\rm rad}_\kappa u|^2}{\sinh^2(\kappa\rho)}\diff x_\kappa&\ge\frac{1}{4}\int_\Omega\frac{u^2}{t^2\sinh^2(\kappa\rho)}\diff x_\kappa+\frac{(n-3)^2\kappa^2}{4}\int_\Omega\frac{u^2}{\sinh^2(\kappa\rho)}\diff x_\kappa\nonumber\\
            &\qquad+\frac{(n-3)(n-5)\kappa^2}{4}\int_\Omega\frac{u^2}{\sinh^4(\kappa\rho)}\diff x_\kappa.
        \end{align*}
    \end{theorem}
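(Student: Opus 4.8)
The plan is to derive all three inequalities from Theorem~\ref{thm:main2} by choosing, in each case, a weight $w$, an admissible function $G$, and the corresponding potential $W$ so that the Riccati ODI~\eqref{eq:Riccati:odi} holds \emph{with equality}. Throughout I will use the identity ${\bf ct}_\kappa(t)^2=\kappa^2+\kappa^2/\sinh^2(\kappa t)$ together with $L_\kappa(t)=(n-1){\bf ct}_\kappa(t)$.

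For the first inequality I would take $w\equiv 1$ and $G(t)=\frac{n-1}{2}{\bf ct}_\kappa(t)-\frac{1}{2t}$. A direct computation of $G'(t)+L_\kappa(t)G(t)-G(t)^2$ shows that the mixed ${\bf ct}_\kappa(t)/t$-terms cancel and one is left with exactly $W_1(t)=\frac{(n-1)^2\kappa^2}{4}+\frac{1}{4t^2}+\frac{(n-1)(n-3)\kappa^2}{4\sinh^2(\kappa t)}$; since $n\ge 5$ this is nonnegative, so $(L_\kappa,W_1)$ is a $(\rho,1)$-Riccati pair and Theorem~\ref{thm:main2} gives the claim. The third inequality is obtained the same way with $w(t)=\sinh^{-2}(\kappa t)$: then $w'(t)/w(t)=-2{\bf ct}_\kappa(t)$, so $L_\kappa(t)+w'(t)/w(t)=(n-3){\bf ct}_\kappa(t)$, and choosing $G(t)=\frac{n-3}{2}{\bf ct}_\kappa(t)-\frac{1}{2t}$ reduces the computation to the previous one with $n$ replaced by $n-2$, yielding $W_3(t)=\frac{1}{4t^2}+\frac{(n-3)^2\kappa^2}{4}+\frac{(n-3)(n-5)\kappa^2}{4\sinh^2(\kappa t)}$; multiplying the conclusion of Theorem~\ref{thm:main2} by $w(\rho)=\sinh^{-2}(\kappa\rho)$ then produces the stated inequality (the $\rho^2$ in its first right-hand term).

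For the second, heavier, inequality I would take $w(t)=t^{-2}$, so that $w'(t)/w(t)=-2/t$ and $L_\kappa(t)+w'(t)/w(t)=(n-1){\bf ct}_\kappa(t)-2/t$, and choose $G(t)=\frac{n-1}{2}{\bf ct}_\kappa(t)-\frac{3}{2t}$. Expanding $G'(t)+((n-1){\bf ct}_\kappa(t)-2/t)\,G(t)-G(t)^2$ and collecting the ${\bf ct}_\kappa^2$, ${\bf ct}_\kappa/t$, $1/t^2$ and constant contributions gives precisely $W_2(t)=\frac{9}{4t^2}-(n-1)\frac{{\bf ct}_\kappa(t)}{t}+\frac{(n-1)^2\kappa^2}{4}+\frac{(n-1)(n-3)\kappa^2}{4\sinh^2(\kappa t)}$, whence Theorem~\ref{thm:main2} applied with this $w,G,W_2$ and multiplication by $w(\rho)=\rho^{-2}$ yields the middle inequality. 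The only genuine subtlety is that the definition of a Riccati pair requires $W_2\ge 0$: this holds because, writing $W_2$ as a quadratic in ${\bf ct}_\kappa(t)$ (after eliminating $1/\sinh^2(\kappa t)$ via ${\bf ct}_\kappa^2=\kappa^2+\kappa^2/\sinh^2$), its discriminant is negative for $n\ge 5$; alternatively one may sidestep this by noting that the proof of Theorem~\ref{thm:main2} uses only $w\ge 0$ and the Riccati ODI, so the conclusion is valid regardless of the sign of $W_2$. The actual work is thus just the three equality-verifications of~\eqref{eq:Riccati:odi}, and I expect the bookkeeping in the second case — where both the $-\frac{3}{2t}$ shift in $G$ and the $-2/t$ from $w'/w$ feed the ${\bf ct}_\kappa(t)/t$ and $1/t^2$ coefficients — to be the only place where a sign or coefficient slip could occur.
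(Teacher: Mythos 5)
Your proposal is correct and takes essentially the same route as the paper: the same three weights \(w\equiv 1\), \(w(t)=t^{-2}\), \(w(t)=\sinh^{-2}(\kappa t)\) and the same admissible functions \(G_1,G_2,G_3\) fed into Theorem~\ref{thm:main2}. You in fact supply more detail than the paper's one-line proof, including the check that \(W_2\ge 0\) for \(n\ge 5\) (which, as you correctly observe, is not even needed since the proof of Theorem~\ref{thm:main2} only uses \(w\ge 0\) and the Riccati ODI).
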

    \begin{proof}
        All these inequalities follow from Theorem~\ref{thm:main2}. In each case, the choice of \(G\) is 
        \[
            G_1(t)=\frac{n-1}{2}{\bf ct}_\kappa(t)-\frac{1}{2t},\quad G_2(t)=\frac{n-1}{2}{\bf ct}_\kappa(t)-\frac{3}{2t}\quad\mbox{and}\quad G_3(t)=\frac{n-3}{2}{\bf ct}_\kappa(t)-\frac{1}{2t},
        \] respectively, while \(L_\kappa(t)=(n-1){\bf ct}_\kappa(t)\), and the functions \(w\) and \(W\) are needed to be chosen corresponding to the left and the right hand side of the given inequality. 
    \end{proof}
    Combining Theorem~\ref{thm:hyp:lower} and Corollary~\ref{cor:hyp:2} we obtain the following result, which concludes the paper.
    \begin{theorem}
        Let \(\kappa>0\), \(n\ge 5\) and \(\Omega\subseteq \mathbb{H}^n_\kappa\) be an open domain. Fix \(x_0\in\Omega\) and denote \(\rho=d_{x_0}\) the Riemannian distance from \(x_0\). Then for every \(u\in C_0^\infty(\Omega)\) one has 
        \begin{align*}
            \int_\Omega |\Delta_\kappa u|^2\diff x_\kappa
            &\ge \frac{(n-1)^4\kappa^2}{16}\int_\Omega u^2\diff x_\kappa
            +\frac{(n-1)^2\kappa^2}{8}\int_\Omega \frac{u^2}{\rho^2}\diff x_\kappa
            +\frac{(n-1)^2\kappa^2}{8}\int_\Omega \frac{u^2}{\rho^2\sinh^2(\kappa \rho)}\diff x_\kappa\\
            &\qquad+
            \frac{(n-1)(n-3)(n^2-2n-1)\kappa^4}{8}\int_\Omega\frac{u^2}{\sinh^2(\kappa\rho)}\diff x_\kappa-\frac{(n-1)\kappa}{4}\int_\Omega\frac{{\bf ct}_\kappa(\rho)|u^2|}{t^3}\diff x_\kappa\\
            &\qquad+\frac{(n^2-1)(n-3)(n-5)\kappa^4}{16}\int_\Omega\frac{u^2}{\sinh^4(\kappa\rho)}\diff x_\kappa+\frac{9}{16}\int_\Omega\frac{u^2}{\rho^4}\diff x_\kappa.
        \end{align*}    
    \end{theorem}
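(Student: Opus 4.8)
The plan is to obtain the claimed estimate with no new analytic input, simply by chaining together Corollary~\ref{cor:hyp:2} with the three inequalities of Theorem~\ref{thm:hyp:lower}; all of the integration‑by‑parts and convexity work has already been done in the proofs of Theorems~\ref{thm:main1} and~\ref{thm:main2}, so what remains is a reduction followed by bookkeeping.

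First I would invoke Corollary~\ref{cor:hyp:2}, which gives
\[
    \int_\Omega |\Delta_\kappa u|^2\diff x_\kappa
    \ge\frac{(n-1)^2\kappa^2}{4}\int_\Omega |\nabla^{\rm rad}_\kappa u|^2\diff x_\kappa
    +\frac14\int_\Omega \frac{|\nabla^{\rm rad}_\kappa u|^2}{\rho^2}\diff x_\kappa
    +\frac{(n^2-1)\kappa^2}{4}\int_\Omega \frac{|\nabla^{\rm rad}_\kappa u|^2}{\sinh^2(\kappa\rho)}\diff x_\kappa .
\]
For \(n\ge 5\) the three coefficients \(\tfrac{(n-1)^2\kappa^2}{4},\ \tfrac14,\ \tfrac{(n^2-1)\kappa^2}{4}\) are positive, so each of the three radial‑gradient integrals on the right may be bounded from below by the matching estimate from Theorem~\ref{thm:hyp:lower}: the plain one by the first inequality, the \(\rho^{-2}\)-weighted one by the second, and the \(\sinh^{-2}(\kappa\rho)\)-weighted one by the third. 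A small point to note is that the second of these estimates carries a term with a negative sign (the one with \({\bf ct}_\kappa(\rho)\rho^{-3}\)); this is harmless, because we are only substituting a valid lower bound multiplied by the positive constant \(\tfrac14\), and so the chain of inequalities is preserved.

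The last step would then be purely algebraic: multiply the first, second and third inequalities of Theorem~\ref{thm:hyp:lower} by \(\tfrac{(n-1)^2\kappa^2}{4}\), \(\tfrac14\) and \(\tfrac{(n^2-1)\kappa^2}{4}\) respectively, add, and collect like terms. The integrals of \(u^2\), of \(\rho^{-4}u^2\), of \(\sinh^{-4}(\kappa\rho)u^2\), and the \({\bf ct}_\kappa\) term each arise from a single line and immediately give the stated coefficients; by contrast \(\rho^{-2}u^2\), \(\sinh^{-2}(\kappa\rho)u^2\) and \(\rho^{-2}\sinh^{-2}(\kappa\rho)u^2\) each receive contributions from two of the three lines, so their coefficients have to be summed — for instance the \(\rho^{-2}\sinh^{-2}(\kappa\rho)u^2\) term appears with coefficient \(\tfrac14\cdot\tfrac{(n-1)(n-3)\kappa^2}{4}+\tfrac{(n^2-1)\kappa^2}{4}\cdot\tfrac14=\tfrac{(n-1)^2\kappa^2}{8}\) after using \((n-1)(n-3)+(n^2-1)=2(n-1)^2\), and the \(\sinh^{-2}(\kappa\rho)u^2\) coefficient collapses to \(\tfrac{(n-1)(n-3)(n^2-2n-1)\kappa^4}{8}\) via the identity \((n-1)^3(n-3)+(n^2-1)(n-3)^2=2(n-1)(n-3)(n^2-2n-1)\). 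There is no genuine obstacle: the only thing to be careful about is keeping track of these overlapping coefficients and checking the handful of polynomial identities that simplify them.
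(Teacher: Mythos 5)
Your proposal is correct and is exactly the paper's argument: the paper proves this theorem precisely by substituting the three estimates of Theorem~\ref{thm:hyp:lower} into the right-hand side of Corollary~\ref{cor:hyp:2} and collecting coefficients, and your polynomial identities check out. (Minor note: your computation actually yields $\frac{(n-1)^4\kappa^4}{16}$ for the $u^2$ term, so the $\kappa^2$ in the stated theorem appears to be a typo in the paper rather than an error in your reduction.)
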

    \vspace*{0.15in}\noindent
    {\bf Acknowledgement.} The author would like to thank Professors Alexandru Kristály and Csaba Farkas for their support throughout the research process.

    \bibliographystyle{plain}
    \bibliography{references}

\end{document}